\tikzset{pullback/.style={minimum size=1.2ex,path picture={
\draw[opacity=1,black,-,#1] (-0.5ex,-0.5ex) -- (0.5ex,-0.5ex) -- (0.5ex,0.5ex);%
}}}
\theoremstyle{plain}
\newtheorem{theorem}{Theorem}[section]
\newtheorem{proposition}[theorem]{Proposition}
\newtheorem{lemma}[theorem]{Lemma}
\newtheorem{corollary}[theorem]{Corollary}
\theoremstyle{definition}
\newtheorem{example}[theorem]{Example}
\newtheorem{definition}[theorem]{Definition}
\newtheorem{remark}[theorem]{Remark}
\newcommand{\dq}[1]{``#1"}
\newcommand{\invmemo}[1]{}
\newcommand{\N}{\mathbb{N}}
\newcommand{\Z}{\mathbb{Z}}
\newcommand{\R}{\mathbb{R}}
\newcommand{\C}{\mathcal{C}}
\newcommand{\vC}{\overline{\C}}
\newcommand{\E}{\mathcal{E}}
\newcommand{\F}{\mathcal{F}}
\newcommand{\G}{\gamma}
\newcommand{\id}{\mathrm{id}}
\newcommand{\op}{\mathrm{op}}
\newcommand{\ob}{\mathrm{ob}}
\newcommand{\Set}{\mathbf{Set}}
\newcommand{\FinSet}{\mathbf{FinSet}}
\newcommand{\PSh}{\mathbf{PSh}}
\newcommand{\Sh}{\mathbf{Sh}}
\newcommand{\sSet}{\mathbf{sSet}}
\newcommand{\Func}[2]{[#1,#2]}
\newcommand{\demph}[1]{\textbf{#1}}
\newcommand{\Spec}{\mathrm{Spec}}
\newcommand{\vo}{{\boxempty}}
\newcommand{\Ax}[1]{\mathbf{adj}_{#1}}
\newcommand{\conn}{\mathrm{conn}}
\newcommand{\Sub}{\mathrm{Sub}}
\newcommand{\Fam}{\mathbf{Fam}}
\newcommand{\Trees}{\mathbf{Trees}}
\newcommand{\Forests}{\mathbf{Forests}}
\DeclareMathOperator*{\colim}{colim}
\renewcommand{\O}{\mathcal{O}}
\newcommand{\tal}{\triangleleft}
\newcommand{\Gl}{\mathbf{Gl}}
\newcommand{\T}{\mathbb{T}}
\title[Completely connected topos]{Grothendieck topoi with a left adjoint to a left adjoint to a left adjoint to the global sections functor}
\author{Ryuya Hora}
\thanks{Graduate School of Mathematical Sciences, University of Tokyo. \url{hora@ms.u-tokyo}}
\subjclass[2020]{18F10}
\keywords{Grothendieck topos, connectedness, adjunction, site characterisation}
\begin{document}

\begin{abstract}
This paper introduces the notion of complete connectedness of a Grothendieck topos, defined as the existence of a left adjoint to a left adjoint to a left adjoint to the global sections functor, and provides many examples.
Typical examples include presheaf topoi over a category with an initial object, such as the topos of sets, the Sierpi\'nski topos, the topos of trees, the object classifier, the topos of augmented simplicial sets, and the classifying topoi of many algebraic theories, such as groups, rings, and vector spaces.

	We first develop a general theory on the length of adjunctions between a Grothendieck topos and the topos of sets.
We provide a site characterisation of complete connectedness, which turns out to be dual to that of local topoi. We also prove that every Grothendieck topos is a closed subtopos of a completely connected Grothendieck topos.
\end{abstract}
\maketitle

\tableofcontents
\section{Introduction}

Some properties of Grothendieck topoi (over the base topos of sets) are defined by the existence of adjoint functors to the global sections functor. For example, a Grothendieck topos $\E$ is said to be \demph{locally connected} if a left adjoint to a left adjoint to the global sections functor exists.
For a topological space $X$, its sheaf topos $\Sh(X)$ is locally connected in this sense if and only if the original topological space $X$ is locally connected. For another example, a Grothendieck topos $\E$ is said to be \demph{local} if the global sections functor has a right adjoint. For a commutative ring $A$, its sheaf topos $\Sh(\Spec A)$ is local in this sense if and only if the ring $A$ is a local ring.
Another characterisation theorem of this kind is in 
\cite{rosebrugh1994adjoint},
which proves that a category $\C$ is equivalent to $\Set$ if and only if it admits a left adjoint to a left adjoint to a left adjoint to a left adjoint to the Yoneda embedding. 

In this paper, we first describe all possible lengths of adjoint strings between a Grothendieck topos and the topos of sets. Starting from the unique left exact cocontinuous functor $\G_0 \colon \Set \to \E$, which is usually denoted by $\G^*$ or $\Delta$, we consider the existence of left adjoint sequence $\G_n \dashv \G_{n-1} \dashv \dots \dashv \G_0$ and the existence of right adjoint sequence $\G_0 \dashv \G_{-1} \dashv \dots \dashv \G_{-m}$. In \Cref{sec:AdjointStrings}, we will observe that these conditions define five different classes of Grothendieck topoi, including the class of all Grothendieck topoi, the class of locally connected Grothendieck topoi, the class of local Grothendieck topoi, and two others (\Cref{thm:AxiomsImplication}).

In \Cref{sec:CompletelyConnectedTopoi}, we will focus on one of the five classes of Grothendieck topoi, which we call \demph{completely connected topoi}. More concretely, a Grothendieck topos is completely connected if it has a left adjoint to a left adjoint to a left adjoint to the global sections functor $\G_2 \dashv \G_{1} \dashv \G_{0} \dashv \G_{-1} \colon \E \to \Set$. 
Our terminology is based on the fact that complete connectedness implies other notions of connectedness, including
connectedness, 
local connectedness,
stable local connectedness \cite{johnstone2011remarks}, and total connectedness \cite{bunge1996spreads} (\Cref{rmk:ComparisonWithOtherConnectedness}).
Unsurprisingly, completely connected topoi have many special properties. 
For example, in a completely connected topos, connected objects are closed under small limits (\Cref{prop:OtherPropertiesOFCCtopos}).
 Although complete conectedness is a very strong condition, it is not too restrictive in the sense that every Grothendieck topos is a closed subtopos of a completely connected topos (\Cref{cor:toposIsClosedSubtoposOfCompletelyConnectedTopos}). We will provide many examples in \Cref{sec:examples}.

What might be surprising is that completely connected topoi are dually similar to local topoi. 
For example, it turns out to be easy to show
that $\PSh(\C)$ is completely connected if and only if $\PSh(\C^\op)$ is local (\Cref{cor:PresheafDuality}).
With a more detailed discussion, we provide a site characterisation of completely connected topoi, which is dual to the site characterisation of local topoi (\Cref{thm:SiteCharacterisation}). Furthermore, the Freyd cover construction of topoi, which makes a topos into a local topos, has a dually similar construction $\E \mapsto \Fam(\E)$, which makes a topos into a completely connected topos
(\Cref{thm:FamIsCompletelyConnected} and \Cref{rmk:FamilyAsArtinGluing}).
While the terminal object plays a central role in the characterisation theorems of local topoi, we define the notion of \demph{the container object} (\Cref{def:containerObject}), which plays the corresponding role in our characterisations of completely connected topoi.

\subsection*{Acknowledgement}
The author would like to thank his supervisor, Ryu Hasegawa, for his continuous support. He is also grateful to Matias Menni and Axel Osmond for their insightful and encouraging advice, and to Yuki Maehara for his careful reading and for pointing out typos and errors.
He also thanks Yuto Kawase, Satoshi Nakata, and Yuno Suzuki for the valuable discussions and comments.
He was supported by JSPS KAKENHI Grant Number JP24KJ0837 and FoPM, WINGS Program, the University of Tokyo.

\section{Adjoint strings between a Grothendieck topos and the base topos of sets}\label{sec:AdjointStrings}
It is well known that several important properties of a Grothendieck topos $\E$ are described in terms of the adjunction between $\E$ and the (base) topos $\Set$.
We start with a related categorical puzzle: How long can maximal adjoint strings between $\E$ and $\Set$ be? (This will be answered in \Cref{rmk:PossibleLengthOfMaximalAdjoint}.)


\begin{definition}
    For a Grothendieck topos $\E$, we define the functor $\G_n$ recursively as follows:
    \begin{itemize}
        \item $\G_0$ is the unique cocontinuous functor $\Set \to \E$ that preserves finite limits.
        \item $\G_{i+1}$ is the left adjoint to the functor $\G_{i}$, if it exists.
        \item $\G_{i-1}$ is the right adjoint to the functor $\G_{i}$, if it exists.
    \end{itemize}
    We say that a Grothendieck topos $\E$ satisfies $\Ax{i}$ if
    $\G_{i}$ exists.
    The condition that every $\G_i$ exists is referred to as $\Ax{\infty}$
\end{definition}
Notice that the functor $\G_{2k}$ is a functor from $\Set$ to $\E$, and $\G_{2k+1}$ is a functor from $\E$ to $\Set$ whenever it exists.

\begin{remark}[Meaning of each $\G_i$]\label{rmk:MeaningOfGamma}
The functor $\G_i$ for $i=0,-1, 1$ has the following conventional interpretations:
\begin{description}
    \item[$\G_{0}$] The functor $\G_0\colon \Set \to \E$, which is usually called \demph{the constant sheaf functor}, sends a set $S$ to the coproduct of $S$-indexed copies of the terminal sheaf $1_{\E}\in \ob(\E)$. This functor is usually denoted by $\Delta$ or $\G^*$.
    \item[$\G_{-1}$] The functor $\G_{-1}\colon \E \to \Set$, which is usually called \demph{the global sections functor}, sends a sheaf $X\in \ob(\E)$ to the set of its global sections $\E(1_{\E}, X)$. This functor is usually denoted by $\Gamma$ or $\G_*$.
    \item[$\G_{1}$] The functor $\G_1 \colon \E \to \Set$, which is usually called \demph{the connected components functor}, sends a sheaf $X$ to the set of its connected components if such a functor $\G_1$ exists. This functor is usually denoted by $\Pi_0$ or $\G_!$.
\end{description}
\end{remark}
Notice that the conditions $\Ax{0}$ and $\Ax{-1}$ always hold for all Grothendieck topoi, since the adjunction $\G_{0}\dashv \G_{-1}$ is the global sections geometric morphism $\E \to \Set$. 
The condition $\Ax{1}$ is equivalent to being locally connected, and $\Ax{-2}$ is equivalent to being local \footnote{These hold since we consider $\Set$ as the base topos. This paper does not study the relative case, which may be of interest.}. 


The following examples will be needed to prove \Cref{thm:AxiomsImplication}.

\begin{example}[The topos of idempotents]\label{exmpl:ToposOfIdempotents}
    Consider the topos of idempotents, whose object is a pair $(X,e)$ of a set $X$ and an idempotent endofunction $e\colon X\to X$, and whose morphism is a function compatible with the associated idempotents. This is equivalent to the presheaf category over the monoid $(\mathbb{F}_2,1, \times)$. For this topos, the $\G_1$ functor is naturally isomorphic to the global sections functor $\G_{-1}$, since each connected component of an object $(X,e)$ has exactly one fixed point. Thus, the topos admits the infinite adjoint string $\cdots \dashv \G_{-1} \dashv \G_0 \dashv \G_{-1} \dashv \cdots$ and satisfies $\Ax{\infty}$.
\end{example}

\begin{example}[The Sierpi\'nski topos]\label{exmpl:ToposOfFunctions}
    The topos of functions $\Set^{\to}$, which is also known as the Sierpi\'nski topos, admits the following adjoint $5$-tuple:
    \[
    \begin{tikzcd}[column sep =90pt]
        \Set^{\to}\ar[r, shift left = 24pt,"\G_{1}"] \ar[r, shift right = 8pt,"\G_{-1}"] &\Set,\ar[l,"\G_2"', shift right = 40pt]\ar[l,"\G_0"', shift right = 8pt]\ar[l,"\G_{-2}"', shift left = 24pt]
    \end{tikzcd}
    \]
    where the functor $\G_{2}$ sends a set $X$ to the function $\emptyset \to X$, and the functor $\G_{-2}$ sends a set $X$ to the function $X \to {\ast}$. Since $\G_2$ does not preserve the terminal object and $\G_{-2}$ does not preserve the initial object, this adjoint $5$-tuple cannot be extended to an adjoint $6$-tuple. This shows that the topos $\Set^{\to}$ satisfies $\Ax{2}$ and $\Ax{-2}$ but does not satisfy $\Ax{\infty}$.
\end{example}

\begin{example}[The sheaf topos over a circle]\label{exmpl:SheafToposOverCircle}
    Let $S^1$ denote the circle regarded as a topological space. The unique geometric morphism $\Sh(S^1)\to \Set$ is locally connected, defining the adjoint triple $\G_1 \dashv \G_0 \dashv \G_{-1}$. 
    \[
    \begin{tikzcd}[column sep = 40pt]
        \Sh(S^1)\ar[r,shift left =10pt,"\G_1"]\ar[r,shift right=10pt, "\G_{-1}"]&\Set\ar[l,"\G_0"']
    \end{tikzcd}
    \]
    This cannot be extended to an adjoint $4$-tuple since $\G_{-1}$ does not preserve coequalizers and $\G_{1}$ does not preserve binary products. To see the latter, one can consider the universal covering $p\colon \R \to S^1$. Although it is connected $\G_1(\R)=1$, its square $\R\times_{S^1} \R \cong \coprod_{i\in \Z}\R \to S^1$ is a countable coproduct of the universal covering. This shows that $\Sh(S^1)$ satisfies $\Ax{1}$, but satisfies neither $\Ax{2}$ nor $\Ax{-2}$.
\end{example}

\begin{example}[The sheaf topos over the Cantor space]\label{exmpl:SheafToposOverCantorSpace}
    For the Cantor space $2^\N$, the unique geometric morphism $\Sh(2^{\N})\to \Set$ cannot be extended to an adjoint triple. The nonexistence of $\G_1$ follows since $2^{\N}$ is not locally connected. Thus, the topos $\Sh(2^\N)$ satisfies neither $\Ax{1}$ nor $\Ax{-2}$.
\end{example}

\begin{theorem}\label{thm:AxiomsImplication}
We have the following implications between the following conditions of a Grothendieck topos $\E$.
\[
\begin{tikzcd}
    &&\E\simeq \Set \ar[d, Rightarrow]&&\\
    &&\G_{-1} \dashv \G_{0} \ar[d, Leftrightarrow]&&\\
    &&\Ax{\infty}\ar[rd, Leftrightarrow]\ar[ld, Leftrightarrow]&&\\
     &\Ax{3}\ar[d, Rightarrow]&&\Ax{-3}\ar[d, Rightarrow]&\\
    &\Ax{2}\ar[d, Rightarrow]\ar[rd, Rightarrow]&&\Ax{-2}\ar[d, Rightarrow]\ar[r,Leftrightarrow]\ar[dl, Rightarrow]&\E\text{ is local}\\
     \E\text{ is locally connected}&\Ax{1}\ar[rd, Rightarrow]\ar[l, Leftrightarrow]&\E\text{ is connected}\ar[d, Rightarrow]&\Ax{-1}\ar[ld, Leftrightarrow]&\\
     &&\Ax{0}\ar[d,Leftrightarrow]&&\\
     &&\E\simeq \E\text{ (always true)}&&
\end{tikzcd}
\]
Furthermore, for each implication symbol $\implies$, the converse implication does not hold.
\end{theorem}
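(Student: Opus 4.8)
My plan is to verify the diagram in three layers.

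\emph{The formal layer.} Because $\G_{i+1}$ and $\G_{i-1}$ are, by the recursive definition, only introduced once $\G_i$ is available, every arrow $\Ax{i+1}\Rightarrow\Ax{i}$ and $\Ax{-(i+1)}\Rightarrow\Ax{-i}$ holds by construction; since the global-sections geometric morphism $\G_0\dashv\G_{-1}$ is always present, $\Ax{0}$ and $\Ax{-1}$ hold for every topos, which accounts for the bottom equivalences $\Ax{0}\Leftrightarrow\Ax{-1}\Leftrightarrow(\E\simeq\E)$ and for every arrow into $\Ax{0}$. The equivalences $\Ax{1}\Leftrightarrow(\E\text{ locally connected})$ and $\Ax{-2}\Leftrightarrow(\E\text{ local})$ are the respective definitions, and $\Ax{-2}\Rightarrow(\E\text{ connected})$ amounts to the standard fact that a local topos is connected. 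If $\E\simeq\Set$ then $\G_0$ is an equivalence, hence has a left adjoint (a quasi-inverse), giving $\G_{-1}\dashv\G_0$. The only implication of this layer I would actually argue is $\Ax{2}\Rightarrow(\E\text{ connected})$: under $\Ax{2}$ the functor $\G_1$ is a right adjoint (of $\G_2$) and therefore preserves the terminal object, so $\G_1(1_\E)\cong 1$; since $\E$ is locally connected this says precisely that $1_\E$ is a connected object, and a topos is connected iff its terminal object is.

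\emph{The heart.} The middle layer is $\Ax{3}\Leftrightarrow\Ax{-3}\Leftrightarrow\Ax{\infty}\Leftrightarrow(\G_{-1}\dashv\G_0)$. I would first isolate the lemma that \emph{a functor $F\colon\Set\to\E$ preserving all small limits and all small colimits is naturally isomorphic to $\G_0$}: evaluation at the point is an equivalence from the category of cocontinuous functors $\Set\to\E$ to $\E$, and preservation of the terminal object forces $F(1)\cong 1_\E$, whence $F\cong\G_0$. Assuming $\Ax{3}$, i.e.\ $\G_3\dashv\G_2\dashv\G_1\dashv\G_0\dashv\G_{-1}$, the functor $\G_2\colon\Set\to\E$ is at once a left adjoint (of $\G_1$) and a right adjoint (of $\G_3$), hence continuous and cocontinuous, so $\G_2\cong\G_0$ by the lemma. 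Feeding this into $\G_2\dashv\G_1$ gives $\G_0\dashv\G_1$, which together with $\G_1\dashv\G_0$ exhibits $(\G_0,\G_1)$ as a cyclic adjoint pair; iterating it produces a $\G_i$ for every $i$ (each $\cong\G_0$ or $\cong\G_1$), so $\Ax{\infty}$ holds, and in particular $\G_1$ is a left adjoint to $\G_0$ while, by uniqueness of adjoints, $\G_{-1}\cong\G_1$, so $\G_{-1}\dashv\G_0$. Dually, $\Ax{-3}$ forces $\G_{-2}$ to be continuous and cocontinuous, hence $\G_{-2}\cong\G_0$, hence $\G_{-1}\dashv\G_0$ and $\Ax{\infty}$. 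Conversely, $\G_{-1}\dashv\G_0$ together with the ever-present $\G_0\dashv\G_{-1}$ is already a cyclic pair and hence yields the whole bi-infinite string, i.e.\ $\Ax{\infty}$; and $\Ax{\infty}\Rightarrow\Ax{3},\Ax{-3}$ is formal.

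\emph{Failure of converses.} For the last clause I would invoke the examples stated before the theorem, one per one-directional arrow. The topos of idempotents (\Cref{exmpl:ToposOfIdempotents}) satisfies $\Ax{\infty}$ but is not equivalent to $\Set$, so the converse of $\E\simeq\Set\Rightarrow(\G_{-1}\dashv\G_0)$ fails. The Sierpi\'nski topos $\Sier$ (\Cref{exmpl:ToposOfFunctions}) satisfies $\Ax{2}$ and $\Ax{-2}$ but not $\Ax{\infty}$, hence neither $\Ax{3}$ nor $\Ax{-3}$, refuting the converses of $\Ax{3}\Rightarrow\Ax{2}$ and $\Ax{-3}\Rightarrow\Ax{-2}$. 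The topos $\Sh(S^1)$ (\Cref{exmpl:SheafToposOverCircle}) is locally connected and connected but satisfies neither $\Ax{2}$ nor $\Ax{-2}$, refuting the converses of $\Ax{2}\Rightarrow\Ax{1}$, $\Ax{2}\Rightarrow(\E\text{ connected})$ and $\Ax{-2}\Rightarrow(\E\text{ connected})$. Finally $\Sh(2^{\N})$ (\Cref{exmpl:SheafToposOverCantorSpace}), sheaves on a non-locally-connected, totally disconnected space, is neither locally connected, nor connected, nor local, refuting the converses of $\Ax{1}\Rightarrow\Ax{0}$, $(\E\text{ connected})\Rightarrow\Ax{0}$ and $\Ax{-2}\Rightarrow\Ax{-1}$.

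\emph{The main obstacle.} The substantive work is all in the heart, and there are two soft-but-fiddly points: upgrading $\G_2(1)\cong 1_\E$ to a \emph{natural} isomorphism $\G_2\cong\G_0$---handled via the equivalence between cocontinuous functors $\Set\to\E$ and objects of $\E$---and checking that a cyclic pair $\G_0\dashv\G_1\dashv\G_0$ really does make each $\G_i$ well-defined with the alternating values asserted. The formal layer is routine or cites standard topos theory, and in the third layer the genuine content---for instance that $\G_1$ does not preserve binary products on $\Sh(S^1)$, and that $\G_2,\G_{-2}$ do not preserve the terminal, respectively initial, object on $\Sier$---is already carried out in the cited examples.
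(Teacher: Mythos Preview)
Your proposal is correct and follows essentially the same route as the paper's proof: the same uniqueness argument ($\G_2$ continuous and cocontinuous forces $\G_2\cong\G_0$, hence $\G_{-1}\cong\G_1\dashv\G_0$, and dually for $\G_{-2}$), the same reason for $\Ax{2}\Rightarrow$ connected ($\G_1$ preserves the terminal object), and the same four counterexamples assigned to the same arrows. The only cosmetic differences are that you package the argument into three layers rather than a bullet-by-bullet list, you spell out the uniqueness lemma for $\G_0$ via the evaluation-at-$1$ equivalence (the paper just asserts $\G_2\cong\G_0$), and you defer $\Ax{-2}\Rightarrow$ connected to the standard fact about local topoi whereas the paper gives the one-line argument that $\G_{-1}$ preserves $1_\E+1_\E$.
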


\begin{proof}
We will prove them one by one.
\begin{description}
    \item[$\E\simeq \Set \implies \G_{-1} \dashv \G_{0}$] This holds trivially, since $\G_{i} \cong \id_{\Set}$ for any $i\in \Z$. \Cref{exmpl:ToposOfIdempotents} shows that the converse does not hold.
    \item[$\G_{-1} \dashv \G_{0} \iff \Ax{\infty} \iff \Ax{3} \iff \Ax{-3}$] It is easy to prove $\G_{-1} \dashv \G_{0} \implies \Ax{\infty} \implies \Ax{3}$ and $\Ax{\infty} \implies \Ax{-3}$. Conversely, the condition $\Ax{3}$ implies that $\G_2$ is continuous and cocontinuous and hence $\G_2\cong \G_0$. This implies $\G_{-1} \cong \G_1 \dashv \G_{0}$. Similarly, the condition $\Ax{-3}$ implies that $\G_{0} \cong \G_{-2}$ and $\G_{-1} \dashv \G_{-2} \cong \G_{0}$.
    \item[Middle cases] The implications $\Ax{3}\implies \Ax{2}$, $\Ax{2}\implies \Ax{1}$, $\Ax{1}\implies \Ax{0}$, $\Ax{-3}\implies \Ax{-2}$, and $\Ax{-2}\implies \Ax{-1}$ are trivial by definition. It is shown that the converse do not hold by \Cref{exmpl:ToposOfFunctions}, \Cref{exmpl:SheafToposOverCircle}, \Cref{exmpl:SheafToposOverCantorSpace}, \Cref{exmpl:ToposOfFunctions}, and \Cref{exmpl:SheafToposOverCantorSpace} respectively.    
    \item[$\E\simeq \E \iff \Ax{-1}\iff \Ax{0}$] This follows since every Grothendieck topos admits the geometric morphism $\G_0\dashv \G_{-1}\colon \E\to \Set$.
    \item[$\Ax{2} \implies \E \text{ is connected}$] Assuming $\Ax{2}$, the functor $\G_1$ preserves the terminal object $\G_1(1_\E) \cong 1_{\Set}$ since $\G_1$ has the left adjoint $\G_{2}$. This shows that the terminal object $1_{\E}$ is connected and the topos $\E$ is connected.
    The converse does not hold, as exemplified by \Cref{exmpl:SheafToposOverCircle}.
    \item[$\Ax{-2} \implies \E \text{ is connected}$] Assuming $\Ax{-2}$ the functor $\G_{-1}\cong \E(1_{\E}, {-})$ preserves the binary coproduct $1_{\E}+1_{\E}$. This means that the terminal object $1_{\E}$ admits no nontrivial coproduct decomposition, which means that the topos $\E$ is connected. The converse does not hold, as exemplified by \Cref{exmpl:SheafToposOverCircle}.
     \item[$\E \text{ is connected} \implies \Ax{0}$] The implication is trivial by definition. The converse does not hold, as exemplified by \Cref{exmpl:SheafToposOverCantorSpace}.
\end{description}
\end{proof}

\begin{remark}[Possible lengths of maximal adjoint strings]\label{rmk:PossibleLengthOfMaximalAdjoint}
With \Cref{thm:AxiomsImplication}, we can conclude that the possible lengths of maximal adjoint $n$-tuples between $\Set$ and a Grothendieck topos $\E$ are $n=2,3,4,5, \infty$. In fact, if there is an adjoint $n$-tuple for $n\geq 4$, at least one of the $n$ functors coincides with the functor $\G_0$. An example of a maximal adjoint quadruple is given by the topos of simplicial sets $\sSet$.
\end{remark}

\begin{remark}[Relationship with quality types]
    Due to \Cref{thm:AxiomsImplication}, a Grothendieck topos $\E$ satisfies $\Ax{\infty}$ if and only if $\G_0 \colon \Set \to \E$ makes $\E$ a quality type over $\Set$ in the sense of \cite[][Definition 1]{lawvere2007axiomatic}.
\end{remark}

\section{Complete connectedness of a Grothendieck topos}\label{sec:CompletelyConnectedTopoi}

In this section, we will define the notion of completely connected Grothendieck topoi (\Cref{def:CompletelyConnectedTopoi}) and provide a site characterisation of them (\Cref{thm:SiteCharacterisation}).

\subsection{Definition}
In this subsection, we define the notion of complete connectedness by $\Ax{2}$. We will see many examples in \Cref{sec:examples}. Here, we only mention that the Sierpi\'nski topos (\Cref{exmpl:ToposOfFunctions}) is the prototypical example of a completely connected topos.
\begin{definition}[Complete connectedness]\label{def:CompletelyConnectedTopoi}
    We say that a Grothendieck topos $\E$ is \demph{completely connected} if $\E$ satisfies $\Ax{2}$.
\end{definition}

We can rephrase the complete connectedness of a Grothendieck topos in terms of the existence of a special object, which we will call a container object.
\begin{definition}[Container object]\label{def:containerObject}
    For a Grothendieck topos $\E$, an object $X\in \E$ is called 
    a \demph{container object} 
    if $\E(X, -)\colon \E \to \Set$ is a left adjoint to the funcor $\G_0\colon \Set \to \E$.
    The container object will be denoted by $\vo$ if it exists.
\end{definition}

By definition, a container object is unique up to isomorphisms if it exists. The reason for the notation $\vo$ will be explained in \Cref{exmp:SierpinskitoposAsCompletelyCOnnectedTopoi}.

\begin{proposition}[Immediate paraphrases]\label{prop:ImmediateRephrase}
    For a Grothendieck topos $\E$, the following conditions are equivalent.
    \begin{enumerate}
        \item $\E$ is completely connected.
        \item $\E$ has a container object.
        \item $\E$ is locally connected and $\G_1$ is representable.
        \item $\E$ is locally connected and $\G_1$ is continuous.
    \end{enumerate}
\end{proposition}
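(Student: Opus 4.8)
The plan is to prove the cycle $(1)\Rightarrow(2)\Rightarrow(3)\Rightarrow(4)\Rightarrow(1)$, unwinding the definitions and invoking the adjoint functor theorem where needed. First, for $(1)\Rightarrow(2)$: by definition $\Ax{2}$ means $\G_2\dashv\G_1\dashv\G_0\dashv\G_{-1}$ all exist. In particular $\G_1$ is a left adjoint, hence cocontinuous; but any cocontinuous functor $\E\to\Set$ out of a Grothendieck topos is representable (it preserves small colimits, and $\E$ is locally presentable, so by the special adjoint functor theorem / Gabriel--Ulmer duality it has a right adjoint, and a functor to $\Set$ with a right adjoint is represented by the image of $\ast$ under that right adjoint). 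Since here $\G_1$ is moreover asserted to have the specific right adjoint $\G_0$, the representing object is $\G_0(\ast)=1_\E$ precomposed appropriately --- more carefully, a left adjoint $L\colon\E\to\Set$ to $\G_0$ is representable with representing object $X$ characterised by $\E(X,Y)\cong L(Y)$ naturally, and one sets $\vo:=$ the object with $\E(\vo,-)\cong\G_1$. Concretely $\vo = \G_0(\ast)$'s companion; the cleanest route is: $L\dashv\G_0$ means $\Set(L(Y),S)\cong\E(Y,\G_0 S)$; take $S=\ast$ trivially, instead use that $L$ preserves colimits and $L(\vo)$-type Yoneda arguments. So $(1)\Rightarrow(2)$ is: $\G_1$ exists and has right adjoint $\G_0$, and being a cocontinuous functor into $\Set$ it is representable, say by $\vo$; this $\vo$ is a container object by Definition~\ref{def:containerObject}.

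Next $(2)\Rightarrow(3)$: if $\E$ has a container object $\vo$, then $\E(\vo,-)$ is by definition left adjoint to $\G_0$, i.e.\ $\G_1$ exists, which is exactly $\Ax{1}$, i.e.\ $\E$ is locally connected (as recorded after Remark~\ref{rmk:MeaningOfGamma}); and $\G_1\cong\E(\vo,-)$ is representable by construction. The implication $(3)\Rightarrow(4)$ is immediate since a representable functor $\E(\vo,-)$ preserves all limits, in particular is continuous. Finally $(4)\Rightarrow(1)$ is the substantive step: assuming $\E$ is locally connected (so $\G_1$ exists and is cocontinuous) and $\G_1$ is in addition continuous, we must produce a left adjoint $\G_2$ to $\G_1$. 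Here $\G_1\colon\E\to\Set$ is a functor between locally presentable categories that preserves all small limits and all small colimits; by the adjoint functor theorem (or: accessible functors preserving limits have left adjoints --- the dual of the usual statement, valid between locally presentable categories) it has a left adjoint $\G_2$. That is exactly $\Ax{2}$.

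The main obstacle is the $(4)\Rightarrow(1)$ step: one needs to be careful that "continuous functor between locally presentable categories has a left adjoint" requires an accessibility/smallness hypothesis, which is automatic here because $\G_1$, being cocontinuous out of a locally presentable category, is accessible, and $\Set$ is locally presentable; so the general adjoint functor theorem applies. I would also double-check in $(1)\Rightarrow(2)$ that the representing object for a cocontinuous $\G_1\colon\E\to\Set$ really is a container object in the precise sense of Definition~\ref{def:containerObject} --- i.e.\ that its right adjoint is canonically $\G_0$ rather than merely \emph{some} right adjoint --- but this follows from uniqueness of adjoints: $\G_1$ has right adjoint $\G_0$ by hypothesis in $(1)$, and if $\G_1\cong\E(\vo,-)$ then $\E(\vo,-)$ is left adjoint to $\G_0$, which is the definition. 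The remaining implications are purely formal.
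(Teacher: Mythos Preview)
Your cycle $(2)\Rightarrow(3)\Rightarrow(4)\Rightarrow(1)$ is fine and matches the paper's argument. The gap is in $(1)\Rightarrow(2)$.

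You open that step by noting that $\Ax{2}$ gives $\G_2\dashv\G_1\dashv\G_0\dashv\G_{-1}$, and then immediately discard $\G_2$ and try to extract representability of $\G_1$ from the fact that $\G_1$ is \emph{cocontinuous} (being left adjoint to $\G_0$). But ``cocontinuous functor $\E\to\Set$ is representable'' is simply false: take $\E=\Set$ and $F(X)=X+X$, which is cocontinuous with right adjoint $G(Y)=Y\times Y$, yet $F$ is not of the form $\Set(c,-)$. Your parenthetical justification (``it has a right adjoint, and a functor to $\Set$ with a right adjoint is represented by the image of $\ast$ under that right adjoint'') has the direction of the adjunction backwards: it is a functor to $\Set$ with a \emph{left} adjoint $L$ that is representable, namely by $L(\ast)$, via $\Set(\ast,\G_1(-))\cong\E(L(\ast),-)$.

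This is exactly what the paper does, and it is a one-line computation you never write down: since you already have $\G_2\dashv\G_1$, set $\vo\coloneqq\G_2(1)$ and observe
\[
\E(\G_2(1),\,-)\ \cong\ \Set(1,\G_1(-))\ \cong\ \G_1.
\]
That is the whole content of $(1)\Rightarrow(2)$. Your proposal mentions that $\G_2$ exists but never actually uses it, and the detour through SAFT and ``$\G_0(\ast)$'s companion'' does not close the gap.
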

\begin{proof}
    The implication $(1)\implies (2)$ follows since $\G_2(1)$ is a contianer object:
    \[
    \E(\G_2(1), -) \cong \Set(1, \G_1(-)) \cong \G_1 .
    \]
    The equivalence $(2)\iff (3)$ immediately follows from the definition of a contianer object.
    The implication $(3) \implies (4)$ is also obvious since all representable functors are continuous.
    Lastly, assuming $(4)$, the easiest case of the adjoint functor theorem implies the existence of $\G_2$.
\end{proof}

\begin{remark}[Relation to other connectedness]\label{rmk:ComparisonWithOtherConnectedness}
    A Grothendieck topos $\E$ (relative to the base topos $\Set$) is called
\begin{itemize}
    \item \demph{locally connected} if the constant sheaf functor $\G_0\colon \Set \to \E$ has a left adjoint $\G_{1}$,
    \item \demph{stably locally connected} \cite{johnstone2011remarks} if it is locally connected and the functor $\G_{1}$ preserves finite products\footnote{This condition also appears in \cite[][]{lawvere1986categories} as \dq{Axiom $1$.}}, and
    \item \demph{totally connected} \cite{bunge1996spreads} if it is locally connected and the functor $\G_{1}$ preserves finite limits.
\end{itemize}
\Cref{prop:ImmediateRephrase} shows that the complete connectedness implies the all connectedness described above.
\[
\begin{tikzcd}[column sep = 0pt]
    \Ax{2}=\ar[dddd, Rightarrow]&\text{ completely connected}\ar[d, Rightarrow]&=\G_{1} \text{ preserves small limits}\\
    &\text{totally connected}\ar[d, Rightarrow]&=\G_{1} \text{ preserves finite limits}\\
    &\text{stably locally connected}\ar[d, Rightarrow]&=\G_{1} \text{ preserves finite products}\\
    &\text{connected and locally connected}\ar[d,Rightarrow]&=\G_{1} \text{ preserves the terminal object}\\
   \Ax{1}=& \text{locally connected}&=\G_{1} \text{ exists}\\
\end{tikzcd}
\]
\end{remark}

\subsection{Container object}
In this subsection, we observe basic properties of a container object.
Informally, a container object looks like \dq{an empty box}, which itself is not \dq{nothing}
(see \Cref{fig:ContainerObjectOfSierpinskitopos} and \Cref{fig:InitialObjectOfSierpinskitopos}).
The following proposition will be repeatedly used in the rest of the paper.
\begin{proposition}[Properties of the container object]\label{prop:FundamentalPropertiesOfContainerObject}
Let $\E$ be a completely connected topos and let $\vo$ be its container object. 
\begin{enumerate}
    \item $\vo$ is connected.
    \item $\vo$ is projective.
    \item $\vo$ is not initial.
    \item For any connected object $X\in \ob(\E)$, there is a unique morphism $\vo \to X$.
    \item $\vo$ is rigid, that is, the identity map $\id_{\vo}$ is the unique endomorphism of $\vo$.
    \item For any object $X\in \ob(\E)$, $X$ is not initial if and only if there exists at least one morphism $\vo \to X$.
\end{enumerate}
\end{proposition}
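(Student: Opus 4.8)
The whole argument will run off the single identification $\G_1 \cong \E(\vo,-)$ established in \Cref{prop:ImmediateRephrase} (with $\vo = \G_2(1)$), combined with two structural inputs. First, $\G_1$ is a left adjoint (of $\G_0$), hence preserves colimits and epimorphisms; in particular $\G_1(0_\E) = \emptyset$. Second, $\E$ is connected by \Cref{thm:AxiomsImplication}, so the constant sheaf functor $\G_0 = \G^*$ is fully faithful, and therefore the counit of $\G_1 \dashv \G_0$ is an isomorphism, i.e. $\G_1 \G_0 \cong \id_{\Set}$. From these I would first record the key computation $\G_1(\vo) \cong 1$: naturally in $S \in \Set$ we have $\Set(\G_1(\vo), S) \cong \E(\vo, \G_0(S)) \cong \G_1(\G_0(S)) \cong S$, so $\G_1(\vo) \cong 1$ by the Yoneda lemma. (Equivalently, in the adjoint triple $\G_2 \dashv \G_1 \dashv \G_0$ the left adjoint $\G_2$ is fully faithful because $\G_0$ is, whence $\G_1(\vo) = \G_1\G_2(1) \cong 1$.)

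Five of the six items then fall out immediately. Item (1): $\G_1(\vo) \cong 1$ is precisely the statement that $\vo$ is connected. Item (3): $\G_1(\vo) \cong 1 \neq \emptyset \cong \G_1(0_\E)$, so $\vo \not\cong 0_\E$. Item (5): the monoid of endomorphisms of $\vo$ is $\E(\vo, \vo) = \G_1(\vo) \cong 1$, so $\id_\vo$ is the only endomorphism. Item (4): if $X$ is connected then $\E(\vo, X) = \G_1(X) \cong 1$, a singleton, so there is exactly one morphism $\vo \to X$. Item (2): $\E(\vo, -) = \G_1$ is a left adjoint, hence preserves epimorphisms, and this is exactly the defining lifting property of a projective object.

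The one item needing a touch more care is (6), and this is where I would be most careful. If $X$ is initial then $\E(\vo, X) = \G_1(0_\E) = \emptyset$, so no morphism $\vo \to X$ exists. Conversely, transposing along $\G_1 \dashv \G_0$ and using that $\G_0$ (a left adjoint) preserves the initial object gives $\E(X, 0_\E) = \E(X, \G_0(\emptyset)) \cong \Set(\G_1(X), \emptyset)$; the right-hand set is inhabited exactly when $\G_1(X) = \emptyset$, i.e. exactly when there is no morphism $\vo \to X$. But the initial object of a topos is strict, so $\E(X, 0_\E)$ is inhabited if and only if $X \cong 0_\E$. Hence $X$ is not initial $\iff \E(\vo, X) = \G_1(X) \neq \emptyset \iff$ there is at least one morphism $\vo \to X$. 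The only (mild) obstacle throughout is bookkeeping: remembering that complete connectedness forces $\E$ to be connected --- this is what makes $\G_0$ fully faithful and pins down $\G_1(\vo)$ --- and remembering that (6) genuinely relies on strictness of the initial object, without which $\G_1(X) = \emptyset$ would only yield a morphism $X \to 0_\E$ rather than an isomorphism.
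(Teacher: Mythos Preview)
Your proof is correct and follows essentially the same approach as the paper: both exploit the identification $\E(\vo,-)\cong\G_1$ together with the adjunction $\G_1\dashv\G_0$ to read off items (1)--(5), and both reduce (6) to the fact that $\G_1(X)=\emptyset$ forces $X$ to be initial. The only difference worth noting is that for (6) you give a self-contained adjunction argument ($\E(X,0_\E)\cong\Set(\G_1(X),\emptyset)$ plus strictness of $0_\E$), whereas the paper simply invokes the standard fact that in a locally connected topos an object with no connected components is initial; your version is slightly more explicit but amounts to the same thing.
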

\begin{proof}
    First, we prove $(1),(2),$ and $(3)$.
    To prove the connectedness (respectively, projectivity) of $\vo$, it suffices to prove that $\E(\vo,-)$ preserves binary coproducts (respectively, epimorphisms). This follows sice $\E(\vo, -) \cong \G_1$ admits a right adjoint $\G_0$.
    The connectedness of $\vo$ implies that $\vo$ is not initial. 

    Next, we prove $(4), (5),$ and $(6)$. The functor $\G_1$ sends an object $X$ to the set of connected components of $X$ (\Cref{rmk:MeaningOfGamma}). Since $\vo$ represents the functor $\G_1$, there is a unique morphism $\vo \to X$ for each connected object $X\in \ob(\E)$.
    In particular, the connectedness of $\vo$ implies that $\vo$ is rigid. The last property $(6)$ follows from the fact that, if an object $X$ does not have connected conponents, then it is initial in a locally connected topos.
\end{proof}

The next proposition shows some unusual properties of completely connected topoi.
As we will not use it in the rest of the paper, the reader can skip this proposition. The full subcategory of a topos $\E$ consisting of connected objects will be denoted by $\E_{\conn}$.

\begin{proposition}[Properties of completely connected topoi]\label{prop:OtherPropertiesOFCCtopos}
    Every completely connected Grothendieck topos $\E$ satisfies the following conditions.
    \begin{enumerate}
        \item The canonical embedding $\E_{\conn} \hookrightarrow \E$ has a left adjoint.
        \item Small limits of connected objects are connected.
        \item The container object $\vo$ is an atom, i.e. $\vo$ has exactly two subobjects.
        \item The subobject classifier $\Omega$ has exactly two connected components.
    \end{enumerate}
\end{proposition}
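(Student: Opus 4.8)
The plan is to prove the four items in sequence, using that the container object $\vo$ represents $\G_1$ and that $\G_1\dashv\G_0\dashv\G_{-1}$.

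For (1), I would exhibit the left adjoint to $\E_{\conn}\hookrightarrow\E$ explicitly. The candidate is $X\mapsto \coprod_{\G_1(X)}\vo$, i.e. the coproduct of $\G_1(X)$-many copies of the container object; equivalently this is $\G_0\G_1$ applied to $X$ but with $1_\E$ replaced by $\vo$, or more precisely the composite of $\G_1\colon\E\to\Set$ with the functor $\Set\to\E_{\conn}$ sending $S\mapsto\coprod_S\vo$. I need two things: first, that $\coprod_S\vo$ is connected for every set $S$ — but $\G_1(\coprod_S\vo)\cong\coprod_S\G_1(\vo)\cong\coprod_S 1\cong S$\dots wait, that gives $S$ connected components, not one. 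Let me reconsider: the correct left adjoint should send $X$ to its "connected reflection." Actually the honest candidate is the \emph{initial connected object under $X$}: since every connected object receives a unique map from $\vo$ (Proposition \ref{prop:FundamentalPropertiesOfContainerObject}(4)), and $\vo$ is connected, the reflection of $X$ into $\E_{\conn}$ should be built from the factorisation of $X\to 1$. Concretely, I expect $L(X)$ to be $\vo$ whenever $X$ is non-initial and the initial object when $X$ is initial; the adjunction isomorphism $\E_{\conn}(L(X),C)\cong\E(X,C)$ then reduces, for non-initial $X$ and connected $C$, to "there is a unique map $\vo\to C$ and a unique map $X\to C$," the latter because $\G_1(X)$\dots no. The cleanest route: define $L=\G_0\G_1$ corestricted appropriately is wrong; instead use that $\E$ totally connected (Remark \ref{rmk:ComparisonWithOtherConnectedness}) means $\G_1$ preserves finite limits, and more, $\G_1$ preserves \emph{all} limits, so $\G_1$ has a left adjoint by the adjoint functor theorem — call it $c\colon\Set\to\E$ — and then verify $c$ lands in $\E_{\conn}$ and that $c\circ\G_1$ is the desired reflection, using the unit/counit and the fact that for connected $C$ the set $\G_1(C)$ is a singleton so $c\G_1(C)\cong c(1)\cong\vo\to C$ is the unique map. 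The main obstacle is pinning down this left adjoint to $\G_1$ and checking its essential image lies in $\E_{\conn}$.

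Item (2) is then a formal consequence: a right adjoint (here $\E_{\conn}\hookrightarrow\E$, which by (1) has a left adjoint, hence is itself\dots no — I want $\E_{\conn}$ closed under limits \emph{in} $\E$). The clean argument: $X$ is connected iff $\G_1(X)\cong 1$, and $\G_1$ preserves small limits (Remark \ref{rmk:ComparisonWithOtherConnectedness}, top line), so if $\{X_i\}$ are connected then $\G_1(\lim X_i)\cong\lim\G_1(X_i)\cong\lim 1\cong 1$, giving $\lim X_i$ connected. I should double-check the empty limit: $\G_1(1_\E)\cong 1$ holds since $\E$ is connected (Proposition \ref{prop:FundamentalPropertiesOfContainerObject} context, or $\G_1$ preserves the terminal object as it has a left adjoint), so $1_\E$ is connected, consistent.

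For (3), I must show $\vo$ has exactly two subobjects, i.e. $\Sub_\E(\vo)\cong\{0,1\}$ with $0\ne 1$ (the latter since $\vo$ is not initial, Proposition \ref{prop:FundamentalPropertiesOfContainerObject}(3)). A subobject $A\hookrightarrow\vo$ gives a complemented decomposition only if it's a summand, which needn't hold; instead I would argue: since $\vo$ is projective (Proposition \ref{prop:FundamentalPropertiesOfContainerObject}(2)) and connected, any mono $A\rightarrowtail\vo$ with $A$ non-initial yields, by $\vo$'s universal property and connectedness of its image/components, that $A\to\vo$ is an iso — more carefully, use that $\G_1$ is left exact so $\G_1(A)\rightarrowtail\G_1(\vo)\cong 1$, forcing $\G_1(A)$ to be $0$ or $1$; if $\G_1(A)\cong 0$ then $A$ is initial (locally connected topos); if $\G_1(A)\cong 1$ then $A$ is connected, so there is a unique map $\vo\to A$, and composing $\vo\to A\rightarrowtail\vo$ gives an endomorphism of $\vo$, which is $\id_\vo$ by rigidity (Proposition \ref{prop:FundamentalPropertiesOfContainerObject}(5)), so $A\rightarrowtail\vo$ is a split epi mono, hence iso. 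That yields exactly the two subobjects $0,\vo$, distinct by non-initiality.

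For (4), I want $\G_1(\Omega)\cong 1+1$. First, $\G_1$ preserves finite coproducts (it's cocontinuous) and the terminal object, and $\Omega$ has the two global points $\top,\bot\colon 1\to\Omega$ which are disjoint (their pullback is $0$) and whose "union" is\dots not all of $\Omega$ in general. Instead I would compute $\G_1(\Omega)$ as follows: $\G_1(\Omega)\cong\E(\vo,\Omega)\cong\Sub_\E(\vo)$, and by item (3) this set has exactly two elements; since $\G_1(\Omega)$ is a set with two elements it is $1+1$, and both of these are (the images of) the connected components, so $\Omega$ has exactly two connected components. This makes (4) an immediate corollary of (3) via representability of $\G_1$ by $\vo$. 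I expect the main obstacle in the whole proposition to be item (1) — correctly identifying and verifying the connected-reflection functor — while (2),(3),(4) then follow smoothly from left-exactness of $\G_1$, rigidity of $\vo$, and the representation $\G_1\cong\E(\vo,-)$.
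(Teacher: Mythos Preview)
Your arguments for (2), (3), and (4) are correct and close to the paper's. Item (4) via $\G_1(\Omega)\cong\E(\vo,\Omega)\cong\Sub(\vo)$ is exactly the paper's argument, and your proof of (3) is a mild variant (the paper invokes Proposition~\ref{prop:FundamentalPropertiesOfContainerObject}(6) directly rather than passing through left-exactness of $\G_1$, but the rigidity step is identical). Your direct proof of (2) via continuity of $\G_1$ is a clean alternative to the paper's ``follows from (1)'' (reflective full subcategories are closed under ambient limits).

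The genuine gap is in (1). The left adjoint $c$ to $\G_1$ that you invoke is precisely $\G_2$, and $\G_2$ does \emph{not} land in $\E_{\conn}$: since $\G_2$ is cocontinuous, $\G_2(S)\cong\coprod_S\vo$, and $\G_1(\coprod_S\vo)\cong S$, so $\G_2(S)$ has $|S|$ connected components. Hence $\G_2\G_1(X)$ is connected only when $X$ already has at most one component, and $\G_2\G_1$ cannot serve as the reflector. (Your earlier candidate ``constant $\vo$ on non-initial $X$'' also fails, since $\E(X,C)$ is rarely a singleton for connected $C$.)

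The paper's construction supplies the missing idea: collapse the $\G_1(X)$ copies of $\vo$ together by defining $\overline{X}$ as the pushout
\[
\begin{tikzcd}
\G_2\G_1(X)\ar[r,"\epsilon_X"]\ar[d,"\G_2(!)"'] & X\ar[d,"\eta_X"]\\
\vo\ar[r] & \overline{X}.
\end{tikzcd}
\]
Applying the cocontinuous $\G_1$ (and using $\G_1\G_2\cong\id_{\Set}$) gives $\G_1(\overline{X})\cong 1$, so $\overline{X}$ is connected. For connected $Y$, any map $\G_2\G_1(X)\to Y$ corresponds under the adjunction to a map $\G_1(X)\to\G_1(Y)\cong 1$, hence is unique; so the pushout's universal property yields $\E(\overline{X},Y)\cong\E(X,Y)$, and $\eta_X$ is the reflection unit. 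Your counit $\epsilon_X$ and the identification $\G_2(1)\cong\vo$ are exactly the right ingredients; the pushout is what was missing.
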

\begin{proof}
    $(1):$ For each object $X\in \ob(\E)$, we define a morphism $\eta_X \colon X \to \overline{X}$ 
    by the following pushout diagram
    \[
    \begin{tikzcd}
        \G_2 \G_1 (X) \ar[d, "\G_2(!)"]\ar[r, "\epsilon_X"]& X \ar[d, "\eta_X"] \\
        \vo\ar[r]&\overline{X}.
    \end{tikzcd}
    \]
    Notice that $\G_2 (1_{\Set}) \cong \vo$.
    Since the left adjoint functor $\G_1$ preserves the pushout diagram, $\overline{X}$ is connected. For any connected object $Y\in \ob (\E)$, the universal property of the pushout shows that $\eta_X^* \colon \E(\overline{X},Y) \cong \E(X,Y)$ is bijective. Therefore, the map $\eta_X \colon X \to \overline{X}$ provides a reflector of the embedding $\E_{\conn} \hookrightarrow \E$.

    $(2):$  This follows from $(1)$.

    $(3):$ We have already shown that the container object $\vo$ is not initial (\Cref{prop:FundamentalPropertiesOfContainerObject} (3)).
    Take an arbitrary non-initial subobject $s\rightarrowtail \vo$. Due to \Cref{prop:FundamentalPropertiesOfContainerObject} (6), there is a morphism $\vo \to s$. Then, the diagram
    \[
    \begin{tikzcd}
        \vo\ar[r]\ar[rd,"\id_{\vo}"']&s\ar[d,tail]\\
        &\vo
    \end{tikzcd}
    \]
    commutes since $\vo$ is rigid (\Cref{prop:FundamentalPropertiesOfContainerObject} (5)). This proves that the monomorphism $s\rightarrowtail \vo$ is a split epimorphism, hence an isomorphism.

    $(4):$ This follows from $(3)$ since we have $\Sub(\vo) \cong \E(\vo, \Omega) \cong \G_1(\Omega)$.
\end{proof}

\subsection{Site characterisation}
This subsection aims to provide a site characterisation of completely conencted topoi.
We start by recalling the notion of irreducibility of an object in a site. See {\cite[][Definition C.2.2.18.(a).]{johnstone2002sketchesv2}} for more details.
\begin{definition}[Irreducible object]
    For a site $(\C,J)$, an object $c\in \C$ is said to be \demph{$J$-irreducible} if the only $J$-covering sieve on $c$ is the maximal sieve.
\end{definition}

\begin{remark}[Site characterisation of local topoi {\cite[][Example C.3.6.3 (d) local site]{johnstone2002sketchesv2}}]
    The site characterisation of local topoi is well known. A site $(\C,J)$ is called \demph{local} if $\C$ has a $J$-irreducible terminal object. A Grothendieck topos $\E$ is local if and only if $\E$ is (equivalent to) the sheaf topos over a small local site.
\end{remark}

Our site characterisation of completely connected topoi is dual to that of local topoi.
\begin{definition}
    We say that a site $(\C,J)$ is \demph{completely connected} if $\C$ has a $J$-irreducible initial object.
\end{definition}

\begin{theorem}\label{thm:SiteCharacterisation}
    A Grothendieck topos $\E$ is completely connected if and only if it is a sheaf topos over a small completely connected site.
\end{theorem}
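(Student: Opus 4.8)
The plan is to prove both directions, mirroring the proof of the local-site characterisation.

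For the \emph{if} direction, suppose $\E \simeq \Sh(\C,J)$ where $\C$ has a $J$-irreducible initial object $0$. By \Cref{prop:ImmediateRephrase}, it suffices to show that $\E$ is locally connected and that $\G_1$ is continuous (equivalently, representable). Local connectedness is automatic: since $0$ is initial in $\C$, the representable presheaf $\C(-,0)$ is the initial presheaf, which is already a sheaf, and more generally the terminal object of $\C$ need not exist, but the point is that $\C(-,0)$ being a sheaf means... actually the cleaner route is to note that for a \emph{locally connected} site one typically asks that every covering sieve be connected as a category; here I would instead argue directly. The key computation: for a sheaf $F \in \Sh(\C,J)$, I claim $\G_1(F) \cong F(0)$, i.e.\ evaluation at the initial object $0$ is the connected-components functor. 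Indeed, $F(0) \cong \Sh(\C,J)(\mathbf{a}(\C(-,0)), F)$ where $\mathbf{a}$ is sheafification, and $\C(-,0) = $ the constant presheaf at $0$-sections. Since $0$ is $J$-irreducible, the only covering sieve on $0$ is the maximal one, so the sheafification of $\C(-,0)$ behaves like a representable at $0$. Then the adjunction $\mathrm{ev}_0 = \Sh(\C,J)(\mathbf{a}y0, -) \dashv \G_0$ needs to be checked: I must verify $\G_0(S)$ evaluated appropriately, i.e.\ that $\mathrm{ev}_0 \circ \G_0 \cong \id_{\Set}$ and that the unit/counit work — concretely that $\G_0(S)(0) \cong S$ for all sets $S$, which holds because $\G_0(S) = \mathbf{a}(\text{constant presheaf } S)$ and evaluation at the initial object of an irreducible site commutes with this sheafification and with colimits. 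Thus $\mathrm{ev}_0$ is left adjoint to $\G_0$, so $\mathbf{a}(y0)$ is a container object and $\E$ is completely connected.

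For the \emph{only if} direction, suppose $\E$ is completely connected with container object $\vo$. I want a small completely connected site presenting $\E$. Choose any small dense full subcategory $\C_0 \subseteq \E$ with the canonical topology (e.g.\ a set of generators closed under finite limits and subobjects), so $\E \simeq \Sh(\C_0, J_{\mathrm{can}})$. The issue is that $\C_0$ may not contain $\vo$ nor have $\vo$ as an initial object. So I would enlarge/modify: let $\C$ be the full subcategory of $\E$ on $\{\vo\} \cup \C_0$, or better, close off under the relevant operations while keeping $\vo$ as the unique object admitting only the identity endomorphism. By \Cref{prop:FundamentalPropertiesOfContainerObject}, $\vo$ is connected, projective, rigid, not initial in $\E$, and there is a unique map $\vo \to X$ for every connected $X$; in particular $\vo$ is initial in the full subcategory $\E_{\mathrm{conn}}$. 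The plan is to take $\C$ to be a small dense subcategory of $\E$ consisting of \emph{connected} objects containing $\vo$ — this is possible because $\E$ is locally connected so connected objects generate — equipped with the induced (canonical) topology $J$. Then $\vo$ is initial in $\C$. It remains to check $\vo$ is $J$-irreducible: a $J$-covering sieve on $\vo$ is a collection of maps into $\vo$ that is jointly epimorphic in $\E$; since $\vo$ is projective and connected, I expect any such sieve to contain a split epi onto $\vo$, hence (by rigidity of $\vo$, cf.\ the proof of \Cref{prop:OtherPropertiesOFCCtopos}(3)) to contain $\id_{\vo}$, so the sieve is maximal.

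The main obstacle, and the step needing the most care, is the \emph{only if} direction's site construction: ensuring one can find a \emph{small dense} subcategory of $\E$ that (a) consists of connected objects, (b) contains $\vo$, and (c) carries the canonical topology in a way that makes $\vo$ genuinely $J$-irreducible rather than merely initial. Density of the connected objects uses local connectedness (every object is a coproduct of connected objects, or at least connected objects form a generating set); smallness follows from $\E$ being a Grothendieck topos; the delicate point is that restricting to connected objects could destroy density of a chosen site or change the induced topology, so I would likely start from \emph{all} connected objects in a sufficiently large small full subcategory and verify the comparison lemma applies. For the \emph{if} direction, the subtlety is justifying that evaluation at the $J$-irreducible initial object really computes $\G_1$ and is left adjoint to $\G_0$; this is the dual of the standard fact that for a local site, evaluation at the terminal object computes the global-sections-with-a-right-adjoint, and I would cite or adapt the argument of Johnstone's Example C.3.6.3(d) verbatim in dual form.
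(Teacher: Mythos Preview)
Your \emph{only if} direction is essentially the paper's argument: take a small generating set of connected objects (possible since $\E$ is locally connected), adjoin $\vo$, and equip the resulting full subcategory with the canonical topology; your irreducibility check via projectivity and connectedness of $\vo$ is a minor variant of the paper's (which instead uses \Cref{prop:FundamentalPropertiesOfContainerObject}(4) to produce the section $\vo\to c$ directly), and both versions work.

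Your \emph{if} direction has a genuine gap. You correctly guess that the container object should be $\mathbf{a}(y0)$ and that $\G_1$ should be evaluation at $0$, but the computation $\G_0(S)(0)\cong S$ does not establish $\mathrm{ev}_0 \dashv \G_0$: it only gives $\mathrm{ev}_0\circ\G_0\cong\id_{\Set}$, and to finish you would have to identify $\G_0(S)$ at \emph{every} object of $\C$, not just at $0$. (The aside that ``$\C(-,0)$ is the initial presheaf'' is also wrong: initiality of $0$ constrains maps out of $0$, not into it.) The paper supplies the missing step by first proving the site is \emph{locally connected}: for any covering sieve $S$ on any $c$, its pullback along $!\colon 0\to c$ is a cover of $0$, hence maximal by irreducibility, so $!\in S$ and thus $S$ is connected as a subcategory of $\C/c$. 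This forces every constant presheaf to be a sheaf, so $\G_0$ is literally the constant-presheaf functor $\Delta$; its left adjoint is then the presheaf colimit functor restricted to sheaves, which is represented by $y(0)$ since $0$ is terminal in $\C^{\op}$. Your closing instinct to dualise Johnstone's C.3.6.3(d) is right, but that dual argument \emph{is} precisely the locally-connected-site step you tried to bypass.
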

\begin{proof}
    \textbf{If part:}
    Let $(\C,J)$ be a small completely connected site with a $J$-irreducible initial object $I$. First we will prove that $(\C,J)$ is a \textit{locally connected site}, i.e., a site in which every $J$-covering sieve $S$ on every object $c\in \ob(\C)$ is connected as a subcategory of $\C/c$ (see \cite[][C3.3.]{johnstone2002sketchesv2}).

    Take an arbitrary object $c\in \C$ and a $J$-covering sieve $S\in J(c)$. Since $J$ is closed under pullback of sieves, we have $(I \xrightarrow{!} c)^* S\in J(I) = \{\text{the maximal sieve on }I\}$, which implies $I \xrightarrow{!} c \in S$. Thus, the subcategory $S\subset \C/c$ is connected, since $I \xrightarrow{!} c \in S$ is an initial object of $\C/c$.

    The local connectedness of the site $(\C,J)$ implies that the constant presheaf fucntor $\Delta\colon  \Set \to \PSh(\C)$ lifts along the embedding $\Sh(\C,J) \hookrightarrow \PSh(\C)$
    \[
    \begin{tikzcd}
        &\Sh(\C,J)\ar[d,hook]\\
        \Set\ar[r,"\Delta"]\ar[ru, dashed, "\G_0"]&\PSh(\C),
    \end{tikzcd}
    \]
    (because the gluing condition for a constant presheaf becomes trivial by the adjunction $\colim_{\C} \dashv \Delta$). By construction, the lift $\Set \to \Sh(\C,J)$ coincides with the functor $\G_0$.

    Due to \Cref{prop:ImmediateRephrase}, it remains to prove that $\G_0$ has a representable left adjoint $\G_1$.
    By the above lifting diagram, we obtain the left adjoint
    \[
    \begin{tikzcd}
        \G_1 \colon \Sh(\C,J)\ar[r, hookrightarrow] & \PSh(\C)\ar[r, "\colim_{\C}"] &\Set.
    \end{tikzcd}
    \]
    Since the fucntor $\colim_{\C} \colon \PSh(\C) \to \Set$ is represented by $y(I) \in \ob(\PSh(\C))$, the sheafification of $y(I)$ represents the functor $\G_1$. This shows that $\Sh(\C,J)$ is a completely connected topos.

    \textbf{Only if part:} Take a generating set $G \subset \ob(\E)$ of the given Grothendieck topos $\E$. Since $\E$ is locally connected and well-powered, we may assume that $G$ consists of connected objects (by replacing them with their summands). Furthermore, we may assume that $G$ contains the container object $\vo$, since it is connected (\Cref{prop:FundamentalPropertiesOfContainerObject} (1)). Consider the small full subcategory $\C_G$ of $\E$ consisting of the objects in $G$ and the canonical topology $J_G$ on $\C_G$. 
    
    It suffices to prove that $(\C_G, J_G)$ is a compltely connected site. Since all objects in $\C_G$ are connected in $\E$, the container object $\vo$ is initial in $\C_G$ (\Cref{prop:FundamentalPropertiesOfContainerObject} (4)). Take an arbitrary $J_G$-covering ($=$ jointly epimorphic) sieve $S$ of $\vo$. Since $\vo$ is not initial in $\E$ (\Cref{prop:FundamentalPropertiesOfContainerObject} (3)), the sieve $S$ is nonempty. Take an element $f\colon c \to \vo$ in $S$.
    Since $c$ is a connected object in $\E$, there is a unique morphism $\vo \overset{s}{\to} c$ (\Cref{prop:FundamentalPropertiesOfContainerObject} (4)), so $\vo \overset{s}{\to} c \overset{f}{\to} \vo$ belongs to $S$. The rigidity of $\vo$ (\Cref{prop:FundamentalPropertiesOfContainerObject} (5)) implies $\id_{\vo}=f\circ s \in S$ and therefore $S$ is the maximal covering. This proves that the small site $(\C_G, J_G)$ is a completely connected site that generates the original Grothendieck topos $\E$.
\end{proof}

\section{Examples}\label{sec:examples}

This section aims to provide many examples of completely connected topoi together with their theoretical consequences.
\subsection{Completely connected presheaf topoi}

\begin{proposition}[Completely connected presheaf topoi]\label{prop:Presheafcase}
    For a small category $\C$, its presheaf topos $\PSh(\C)$ is completely connected if and only if the Cauchy completion of $\C$ has an initial object.
\end{proposition}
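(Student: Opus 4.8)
The plan is to apply the site characterisation (\Cref{thm:SiteCharacterisation}) to the canonical presentation of $\PSh(\C)$. Recall that $\PSh(\C) \simeq \PSh(\bar\C)$ where $\bar\C$ is the Cauchy (Karoubi) completion of $\C$, and that a presheaf topos, presented by the trivial topology, is the sheaf topos over the site $(\bar\C, J_{\mathrm{triv}})$ in which the only covering sieve on each object is the maximal one. Under the trivial topology \emph{every} object is $J_{\mathrm{triv}}$-irreducible, so $(\bar\C, J_{\mathrm{triv}})$ is a completely connected site precisely when $\bar\C$ has an initial object. Hence if $\bar\C$ has an initial object, \Cref{thm:SiteCharacterisation} immediately gives that $\PSh(\C)$ is completely connected. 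This is the \textbf{if} direction.

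For the converse, I would argue directly rather than through the site theorem, since the site produced in the proof of \Cref{thm:SiteCharacterisation} need not be $\C$ itself. Suppose $\PSh(\C)$ is completely connected with container object $\vo$. Since $\vo$ is projective (\Cref{prop:FundamentalPropertiesOfContainerObject}(2)) and connected, and the representables $y(c)$ for $c \in \bar\C$ form a generating family of tiny (hence projective) objects, one can write $\vo$ as a retract of a coproduct $\coprod_i y(c_i)$; projectivity of $\vo$ makes the canonical epimorphism split, and connectedness of $\vo$ forces the splitting to factor through a single summand $y(c_0)$. Thus $\vo$ is a retract of some $y(c_0)$, and since $\bar\C$ is Cauchy complete and $y$ is full and faithful and closed under retracts of representables, $\vo \cong y(c_0)$ for some object $c_0 \in \bar\C$. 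Now for every $c \in \bar\C$ the representable $y(c)$ is connected in $\PSh(\C)$ (its category of elements $\C/c$ has a terminal object, hence is connected), so by \Cref{prop:FundamentalPropertiesOfContainerObject}(4) there is a unique morphism $\vo \to y(c)$, i.e. a unique morphism $c_0 \to c$ in $\bar\C$ by Yoneda. Therefore $c_0$ is an initial object of $\bar\C$.

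The main obstacle I anticipate is the step identifying $\vo$ with a representable: showing that a connected projective object of a presheaf topos is (isomorphic to) a representable over the Cauchy completion. The key inputs are that representables are projective and jointly generate, that projectivity splits the covering epimorphism $\coprod_i y(c_i) \twoheadrightarrow \vo$, that connectedness of $\vo$ collapses the resulting coproduct decomposition of a retract to a single index, and finally that $\bar\C$ being Cauchy complete means retracts of representables are again representable (this is exactly the defining property of the Karoubi envelope, transported along $y$). Once $\vo \cong y(c_0)$ is in hand, the rest is a direct Yoneda computation using only \Cref{prop:FundamentalPropertiesOfContainerObject}. A cosmetic point to handle along the way: one should note $\PSh(\C) \simeq \PSh(\bar\C)$ so that "container object" and "connectedness" transport across, and that an initial object of $\bar\C$, being a retract-closed idempotent-splitting of $\C$, is what the statement refers to.
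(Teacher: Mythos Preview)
Your proposal is correct and follows essentially the same route as the paper: the \textbf{if} direction applies \Cref{thm:SiteCharacterisation} to $(\bar\C, J_{\mathrm{triv}})$, and the \textbf{only if} direction observes that the container object, being connected and projective, must be representable over $\bar\C$, after which \Cref{prop:FundamentalPropertiesOfContainerObject}(4) and Yoneda make the representing object initial. The paper compresses the representability step into a single clause (``connected and projective \dots\ and therefore represented by an object of $\bar\C$''), whereas you spell out the standard argument (split the generating epimorphism, use connectedness to land in one summand, use Cauchy completeness to absorb the retract); this is the same proof with more detail shown.
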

\begin{proof}
Let $\vC$ denote the Cauchy completion of $\C$.
If $\vC$ has an initial object, $\vC$ equipped with the trivial topology is a completely connected site that generates the topos $\PSh(\vC)$. \Cref{thm:SiteCharacterisation} implies that $\PSh(\C) \simeq \PSh(\vC)$ is a completely connected topos.

Conversely, if the topos $\PSh(\C)$ is completely connected, its container object $\vo$ is connected and projective (\Cref{prop:FundamentalPropertiesOfContainerObject} (1), (2)) and therefore represented by an object of $\vC$.  
Let $I\in \ob(\vC)$ be the representing object of $\vo \in \PSh(\vC) \simeq \PSh(\C)$. For any other object $X\in \ob(\vC)$, the hom-set $\vC(I, X) \cong \PSh(\vo, X)$ is a singleton due to \Cref{prop:FundamentalPropertiesOfContainerObject} (4). This shows that $I$ is initial in $\vC$.
\end{proof}

\begin{remark} The if part of \Cref{prop:Presheafcase} has a more direct proof without mentioning \Cref{thm:SiteCharacterisation}.
    If $\C$ admits an initial object, the unique functor $!\colon \C \to 1$ admits a left adjoint $i\dashv\; !$. This adjunction lifts to the adjunction between their presheaf topoi $i^* \dashv\; !^* = \G_0\colon \Set \to \PSh(\C)$. Then, the functor $i^* = \G_1$ admits a left adjoint given by the left Kan extension along $i$, which witnesses that $\PSh(\C)$ is completely connected.
\end{remark}

We have an immediate corollary.
\begin{corollary}
\label{cor:PresheafDuality}
    For a small category $\C$, $\PSh(\C)$ is completely connected if and only if $\PSh(\C^{\op})$ is local. 
\end{corollary}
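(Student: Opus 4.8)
The plan is to deduce \Cref{cor:PresheafDuality} directly from \Cref{prop:Presheafcase} together with the dual site characterisation of local topoi, without reproving anything. First I would recall the analogous statement for local presheaf topoi: by the local site characterisation (quoted just before \Cref{thm:SiteCharacterisation}), $\PSh(\D)$ is local if and only if $\D$ with the trivial topology is a local site, i.e. $\D$ has a terminal object that is trivially covered — but over the trivial topology every object is irreducible, so this is simply the condition that $\D$ has a terminal object. One subtlety: $\PSh(\D) \simeq \PSh(\D')$ whenever $\D'$ is the Cauchy completion of $\D$, and Cauchy completion is invariant under this equivalence, so the honest statement is that $\PSh(\D)$ is local if and only if the Cauchy completion of $\D$ has a terminal object.

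The key step is then the observation that Cauchy completion commutes with taking opposite categories: $\overline{\C^{\op}} \simeq (\overline{\C})^{\op}$. This holds because an idempotent in $\C^{\op}$ is the same as an idempotent in $\C$, and a splitting of it in $(\overline\C)^{\op}$ corresponds to a splitting in $\overline\C$; more conceptually, Cauchy completion is the closure under retracts inside the presheaf category, and retracts are a self-dual notion. Granting this, the chain of equivalences runs: $\PSh(\C)$ is completely connected $\iff$ $\overline{\C}$ has an initial object (by \Cref{prop:Presheafcase}) $\iff$ $(\overline{\C})^{\op}$ has a terminal object $\iff$ $\overline{\C^{\op}}$ has a terminal object $\iff$ $\PSh(\C^{\op})$ is local (by the local analogue of \Cref{prop:Presheafcase}).

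I expect the only point needing a word of justification is the self-duality of Cauchy completion, $\overline{\C^{\op}} \simeq (\overline{\C})^{\op}$, and the precise form of the local presheaf criterion; both are standard but deserve a sentence and a citation. Everything else is a formal manipulation of the two biconditionals, so this should be a short paragraph-length proof rather than anything requiring real work.

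\begin{proof}
Recall first the local analogue of \Cref{prop:Presheafcase}: for a small category $\D$, the presheaf topos $\PSh(\D)$ is local if and only if the Cauchy completion $\overline{\D}$ of $\D$ has a terminal object. Indeed, if $\overline{\D}$ has a terminal object, then $\overline{\D}$ with the trivial topology is a local site (every object is irreducible for the trivial topology), so $\PSh(\D)\simeq \PSh(\overline{\D})$ is local by the site characterisation of local topoi. Conversely, if $\PSh(\D)$ is local, then the right adjoint $\G_{-2}\colon \Set \to \PSh(\D)$ of the global sections functor preserves colimits (being a left adjoint to $\G_{-1}$ in the adjoint string $\G_{-1}\dashv \G_{-2}$) and finite limits, hence $\G_{-2}\cong \G_0$; the resulting object $\G_0(1)=1_{\PSh(\D)}$ is then (as a left adjoint value) also the value $\G_{-2}(1)$, and being terminal it is in particular connected, projective and rigid, so it is representable by an object of $\overline{\D}$, which is then terminal in $\overline{\D}$ by the dual of the last paragraph of the proof of \Cref{prop:Presheafcase}.

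Next, Cauchy completion is self-dual: $\overline{\C^{\op}} \simeq (\overline{\C})^{\op}$ for every small category $\C$. This is because an idempotent in $\C^{\op}$ is precisely an idempotent in $\C$, a splitting of such an idempotent in $(\overline{\C})^{\op}$ is a splitting in $\overline{\C}$, and the Cauchy completion is the idempotent-splitting completion, a construction invariant under passing to opposite categories.

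Combining these, we obtain the chain of equivalences
\[
\PSh(\C)\ \text{completely connected}\iff \overline{\C}\ \text{has an initial object}\iff (\overline{\C})^{\op}\ \text{has a terminal object},
\]
and then
\[
(\overline{\C})^{\op}\ \text{has a terminal object}\iff \overline{\C^{\op}}\ \text{has a terminal object}\iff \PSh(\C^{\op})\ \text{is local},
\]
where the first equivalence in each line uses \Cref{prop:Presheafcase} and its local analogue above, and the intermediate equivalence uses the self-duality of Cauchy completion. This proves the claim.
\end{proof}
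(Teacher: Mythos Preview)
Your overall strategy---reduce to \Cref{prop:Presheafcase}, its local analogue, and the self-duality of Cauchy completion---is exactly what the paper does (the paper's one-sentence proof simply asserts the local analogue). However, your attempted proof of the converse direction of that analogue contains a genuine error. You write that $\G_{-2}$ ``preserves colimits (being a left adjoint to $\G_{-1}$ in the adjoint string $\G_{-1}\dashv \G_{-2}$)'', but in that string $\G_{-2}$ is the \emph{right} adjoint of $\G_{-1}$, not a left adjoint; it preserves limits, not colimits. The conclusion $\G_{-2}\cong \G_0$ is therefore unjustified, and in fact false for local topoi in general: by \Cref{thm:AxiomsImplication} it is equivalent to $\Ax{\infty}$, whereas the Sierpi\'nski topos (\Cref{exmpl:ToposOfFunctions}) is local with $\G_{-2}\not\cong\G_0$. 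The follow-up claim that $1$ is projective ``being terminal'' is likewise not valid on its own.

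The fix is to dualise the actual argument of \Cref{prop:Presheafcase}. If $\PSh(\D)$ is local, then $\G_{-1}=\PSh(\D)(1,-)$ has a right adjoint and hence preserves all colimits; in particular it preserves coproducts and epimorphisms, so the terminal object $1$ is connected and projective. Exactly as in the proof of \Cref{prop:Presheafcase}, a connected projective object of a presheaf topos is represented by an object $t\in\overline{\D}$, and then $\overline{\D}(d,t)\cong\PSh(\overline{\D})(y(d),1)$ is a singleton for every $d$, so $t$ is terminal. With this correction your proof is complete and agrees with the paper's.
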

\begin{proof}
    One can similarly prove that a presheaf topos $\PSh(\C)$ is local if and only if the Cauchy completion $\vC$ has a terminal object.
\end{proof}

\begin{example}[Sets]\label{exmp:toposOfSetsAsCompletelyCOnnectedTopoi}
    The simplest example of a completely connected topos is the topos of sets $\Set$, which is a presheaf topos over the terminal category $1$. Notice that the degenerate topos $1 \simeq \PSh(\emptyset)$ is not completely connected, since the empty category, which is Cauchy complete, does not have an initial object.
\end{example}

\begin{example}[Sierpi\'nski topos]
\label{exmp:SierpinskitoposAsCompletelyCOnnectedTopoi}
    The Sierpi\'nski topos $\Set^{\to}$ is completely connected since the indexing category $\to$ has an initial object (see also \Cref{exmpl:ToposOfFunctions}). This is a prototypical example of a completely connected topos.
    
    We can regard an object of the Sierpi\'nski topos as a family of sets $\{X_{\lambda}\}_{\lambda\in \Lambda}$. For a given object of $\Set^{\to}$, which is a function $A\to B$, the corresponding family of sets is given by $\{f^{-1}(b)\}_{b\in B}$.
    (This correspondence will be described again in \Cref{exmpl:PresheafCaseOfFamilyConstruction}.)
    From this point of view, a typical object, the container object $\emptyset \to \{\ast\}$, and the initial object $\emptyset \to \emptyset$ look like \Cref{fig:ObjectOfSierpinskitopos}, \Cref{fig:ContainerObjectOfSierpinskitopos}, and \Cref{fig:InitialObjectOfSierpinskitopos}, respectively. This is the reason why we chose our notation $\vo$ for the container object.
\end{example}

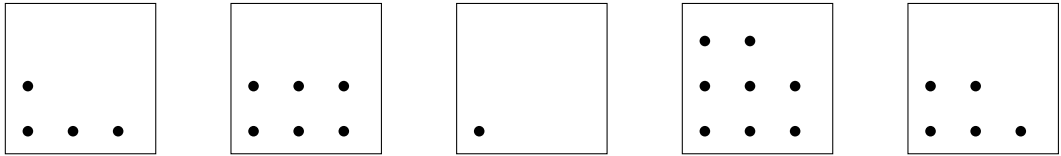
\begin{figure}[ht]
    \centering
    \begin{tikzpicture}
        \def\squareSize{2}
        \foreach \x/\numPoints in {
            0/3,
            3/5,
            6/0,
            9/7,
            12/4
        } {
            \draw (\x,0) rectangle (\x+\squareSize, \squareSize);
            
            \foreach \i in {0, ...,\numPoints} {
                \pgfmathparse{0.3 + mod(\i,3) * 0.6} \let\px\pgfmathresult
                \pgfmathparse{0.3 + floor(\i/3) * 0.6} \let\py\pgfmathresult
                \fill (\x+\px, \py) circle (2pt);
            }
        }
    \end{tikzpicture}
    \caption{An object of the Sierpi\'nski topos}
    \label{fig:ObjectOfSierpinskitopos}
\end{figure}

\begin{figure}[ht]
    \centering
    \begin{tikzpicture}
        \def\squareSize{2}
        \draw (0,0) rectangle (\squareSize, \squareSize);
    \end{tikzpicture}
    \caption{The container object of the Sierpi\'nski topos looks like \dq{an empty box.}}
    \label{fig:ContainerObjectOfSierpinskitopos}
\end{figure}
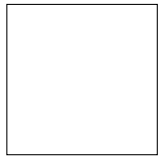
\begin{figure}[ht]
    \centering
    \begin{tikzpicture}
        \def\squareSize{2}
        \draw[white] (0,0) rectangle (\squareSize, \squareSize);
    \end{tikzpicture}
    \caption{The initial object of the Sierpi\'nski topos looks like \dq{nothing.}}
    \label{fig:InitialObjectOfSierpinskitopos}
\end{figure}

\begin{example}[Augmented simplicial sets]\label{exmpl:AugmentedSimplicialSets}
    The topos of augmented simplicail sets is another prototypical example of a completely connected topos. Its container object is the standard $(-1)$-simplex. An augmented simplicial set is regarded as a family of simplicial sets. From this point of view, the container object is the singleton family of an empty simplicial set.
\end{example}

\begin{example}[The object classifier]\label{exmpl:ObjectClassifier}
    The classifying topos of the theory of objects $\Func{\FinSet}{\Set}$ is completely connected. Its container object is the universal object $\iota \colon \FinSet \hookrightarrow \Set$, which is an example of a non-subterminal container object. Similarly, the classifying topos of inhabited objects $\Func{\FinSet_{\neq \emptyset}}{\Set}$ is also completely connected.
\end{example}


\begin{example}[Idempotents]\label{exmpl:ToposOfIdempotentsAsCompletelyConnected}
    The topos of idempotents is completely connected (see also \Cref{exmpl:ToposOfIdempotents}). Its container object is the terninal object $\id_1\colon 1\to 1$, which is an example of a container object $\vo$ that admits a global section $1 \to \vo$. 
    In \Cref{prop:characterisationOfAxiomInfty}, we will prove that a completely connected topos $\E$ satisfies $\Ax{\infty}$ if and only if the container object admits a global section.
\end{example}

\begin{example}[Cosimplicial sets]\label{exmpl:toposOsCosimplicialSets}
    The topos of cosimplicial sets $\PSh(\Delta^{\op})$ is completely connected. This is a typical example of \Cref{cor:PresheafDuality} reflecting the fact that the topos of simplicial sets is local. Similarly, the classifying topos of inhabited objects (\Cref{exmpl:ObjectClassifier}) can be seen as \dq{the dual} of the local topos of \demph{symmetric simplicial sets} $\PSh(\FinSet_{\neq \emptyset})$, which is also called the (non-tirivial) Boolean algebra classifier (see \cite{lawvere1988toposesGenerated}).
\end{example}

\begin{example}[Classifying topoi of equational theories]\label{exmpl:Classyfingtopos}
    Let $\T$ be an equational theory and $\C_\T$ be the category of finitely presented $\T$-algebras. The classifying topos of $\T$-algebras, which is the functor catgeory $\Func{\C_\T}{\Set}$, is completely connected if and only if $\C_\T$ has a terminal object.
    Furthermore, the category $\C_\T$ has a terminal object if and only if the terminal $\T$-algebra is finitely presented (since $\C_\T$ contains the $\T$-algebra freely generated by one element).
    For example, the classifying topoi of groups, rings, abelian groups, monoids, $\R$-vector spaces, and lattices are completely connected. If the number of symbols is much larger than the number of axioms, such as the theory with infinite constant symbols without axioms, the classifying topos is not completely connected. 
    
    Although those classifying topoi always satisfy $\Ax{-2}$, it satisfies $\Ax{\infty}$ if and only if the category of $\T$-algebras has a zero object. For example, the classifying topoi of groups, abelian groups, and monoids satisfy $\Ax{\infty}$, but the classifying topos of rings does not satisfy $\Ax{\infty}$.
\end{example}

\begin{example}[Trees]\label{Exmpl:ToposOfTrees}
We define the category of (rooted) trees $\Trees$ as follows. An object of $\Trees$ is a pair $(G, r)$ of a (directed and possibly infinite) graph $G= (V,E)$ and a vertex $r\in  V$ such that, for every vertex $v\in V$, there exists a unique (directed) path from $v$ to $r$.
A morphism $f\colon (G,r) \to (G',r')$ in $\Trees$ is a graph homomorphism $f\colon G \to G'$ that preserves the root $f(r) = r'$.

The category of trees is a completely connected presheaf topos, because it is equivalent to the presheaf topos over the totally ordered set $(\omega, \leq)$. The equivalence $F\colon \Trees \xrightarrow{\simeq} \PSh(\omega, \leq)$ sends an object $(G= (V,E), r\in V)$ to the presheaf
    $F(G, r) = (\dots \xrightarrow{f_3} X_2 \xrightarrow{f_2} X_1 \xrightarrow{f_1} X_0 ) \in \ob (\PSh(\omega, \leq ))$ 
    where 
    \[
    X_n \coloneqq \{v\in V \mid \text{the distance from $v$ to $r$ is exactly $n+1$}\}\subset V.
    \]
    For $n\geq 1$ and $x\in X_n$, we define $f_n(x) \in X_{n-1}$ as the unique vertex that has an edge $x \to f_n (x)$.

    What is interesting about this topos is that $\Trees$ is naturally equivalent to the category of families of itself, which we will explain in \Cref{exmpl:PresheafCaseOfFamilyConstruction}. In other words, defining the topos of rooted forests $\Forests$ as the category of families of objects of $\Trees$, we have an equivalence of categories $\Trees \simeq \Forests$, which is visualised in \Cref{fig:TreesAndForestsCorrespondence}.
\end{example}

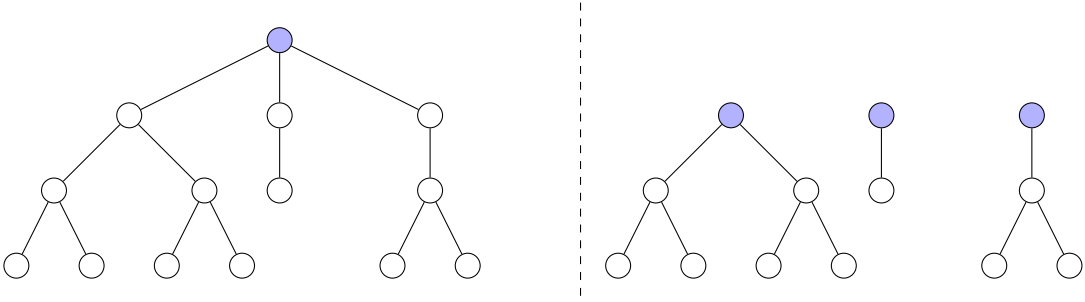
\begin{figure}[ht]
    \centering
    \begin{tikzpicture}
        \node (A) at (0,0) [circle,draw,fill=blue!30] {};
        \node (B) at (-2,-1) [circle,draw] {};
        \node (C) at (0,-1) [circle,draw] {};
        \node (D) at (2,-1) [circle,draw] {};
        \node (E) at (-3,-2) [circle,draw] {};
        \node (F) at (-1,-2) [circle,draw] {};
        \node (G) at (0,-2) [circle,draw] {};
        \node (H) at (2,-2) [circle,draw] {};
        \node (I) at (-3.5,-3) [circle,draw] {};
        \node (J) at (-2.5,-3) [circle,draw] {};
        \node (K) at (-1.5,-3) [circle,draw] {};
        \node (L) at (-0.5,-3) [circle,draw] {};
        \node (M) at (1.5,-3) [circle,draw] {};
        \node (N) at (2.5,-3) [circle,draw] {};

        \draw (A) -- (B);
        \draw (A) -- (C);
        \draw (A) -- (D);
        \draw (B) -- (E);
        \draw (B) -- (F);
        \draw (C) -- (G);
        \draw (D) -- (H);
        \draw (E) -- (I);
        \draw (E) -- (J);
        \draw (F) -- (K);
        \draw (F) -- (L);
        \draw (H) -- (M);
        \draw (H) -- (N);

        \node (B2) at (6,-1) [circle,draw,fill=blue!30] {};
        \node (C2) at (8,-1) [circle,draw,fill=blue!30] {};
        \node (D2) at (10,-1) [circle,draw,fill=blue!30] {};
        \node (E2) at (5,-2) [circle,draw] {};
        \node (F2) at (7,-2) [circle,draw] {};
        \node (G2) at (8,-2) [circle,draw] {};
        \node (H2) at (10,-2) [circle,draw] {};
        \node (I2) at (4.5,-3) [circle,draw] {};
        \node (J2) at (5.5,-3) [circle,draw] {};
        \node (K2) at (6.5,-3) [circle,draw] {};
        \node (L2) at (7.5,-3) [circle,draw] {};
        \node (M2) at (9.5,-3) [circle,draw] {};
        \node (N2) at (10.5,-3) [circle,draw] {};

        \draw (B2) -- (E2);
        \draw (B2) -- (F2);
        \draw (C2) -- (G2);
        \draw (D2) -- (H2);
        \draw (E2) -- (I2);
        \draw (E2) -- (J2);
        \draw (F2) -- (K2);
        \draw (F2) -- (L2);
        \draw (H2) -- (M2);
        \draw (H2) -- (N2);

        \draw[dashed] (4,0.5) -- (4,-3.5);
    \end{tikzpicture}
    \caption{The correspondence between rooted trees and rooted forests}
    \label{fig:TreesAndForestsCorrespondence}
\end{figure}

\begin{example}[Monoid action topos]
Recall that an element $z\in M$ of a monoid $M$ is called a \demph{right zero element} if, for any element $m\in M$, we have $mz=z$. 
\Cref{prop:Presheafcase} implies that, for a monoid $M$, its presheaf topos $\PSh(M)$ is completely connected if and only if $M$ has a right zero element $z\in M$. 
(The proof is as follows: The Cauchy completion of $M$, which is denoted by $\overline{M}$, has idempotent elements of $M$ as objects. Each homset $\overline{M}(i_1,i_2)$ is given by $\{m\in M\mid i_2 m i_1=m\}$.
It is easy to see that a right zero element of $M$, which is idempotent, is an initial object of $\overline{M}$ if it exists. 
Conversely, suppose that an idempotent element $z\in M$ is an initial object in $\overline{M}$. Then, for any element $m\in M$, both $mz$ and $z$ are morphisms from $z$ to the neutral element $e_M\in M$, therefore $mz=z$. This proves that $z$ is a right zero element of $M$.)

Its container object is given by $\vo = \{m\in M \mid zm =m\} $ equipped with the restriction action of the representable $M$-set $M$.
Since the container object $\vo$ is the colimit of the idempotent action $z \ast - \colon M \to M \in \PSh(M)$, it represents the functor sending a $M$-set $X$ to its set of $z$-fixed points:
\[
\{x\in X \mid xz=x\} \cong \PSh(M)(\vo, X) \cong \G_1 (X).
\] 

We can interpret this bijection by regarding the monoid $M$ as a set of operations.
From this point of view, the right zero element $z\in M$ corresponds to the \dq{set-default} operation. Then, the above bijection means that the number of connected components is equal to the number of \dq{default states.}


Let us give an example of this interpretation
inspired by \cite{lawvere1989display}\footnote{Notice that a graphic monoid has a left zero-element (\cite[][Proposition 11]{lawvere1989display}). Therefore its presheaf topos is a local topos, which is dually similar to our case.}.
Consider the operations involved in reading books:
\begin{description}
    \item[$\blacktriangleright$] Move to the next page if there is one.
    \item[$\blacktriangleleft$] Move to the previous page if there is one.
    \item[$z$] Return to the title page.
\end{description}
These operations (with natural equations) generate a monoid $M$, (which can be formally defined as the opposite of the semidirect product $\{\blacktriangleright, \blacktriangleleft\}^* \rtimes \langle z \mid z^2 = z \rangle$,) where $z$ is a right zero element. 
Then, the above bijection means that the number of books is equal to the number of title pages.

\end{example}

\subsection{Completely connected localic topoi}

\begin{lemma}\label{lem:TinyImpliesSubterminalInLocalicTopos}
    For a localic topos $\E$, a projective and connected object is subterminal.
\end{lemma}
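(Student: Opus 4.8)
The plan is to work directly with the localic hypothesis: write $\E \simeq \Sh(L)$ for a locale $L$, so that objects of $\E$ are sheaves on the frame $\O(L)$ of opens. Let $P$ be a projective, connected object; I want to show $P$ is subterminal, i.e. the unique map $P \to 1$ is a monomorphism. The first step is to reduce to a statement about the support: since $\E$ is localic, the subobjects of $1$ are exactly the opens $U \in \O(L)$, and the image of $P \to 1$ is some open $U$ (the support of $P$). Replacing $L$ by $U$ (the slice $\E/U \simeq \Sh(U)$ is again localic, and $P$ remains projective and connected there, being connected and having full support), I may assume $P$ has support all of $L$, i.e. $P \to 1$ is an epimorphism. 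It then remains to prove that an epimorphism $P \twoheadrightarrow 1$ from a projective connected object is an isomorphism.

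The second step uses projectivity to split this epimorphism \emph{locally}. Since $P \twoheadrightarrow 1$ is epic, there is a cover $\{U_i\}$ of $L$ such that each $U_i \to 1$ factors through $P$; equivalently, $P$ has a section over each $U_i$, i.e. a map $U_i \to P$ over $1$. Now I would like to glue or compare these local sections. Here connectedness of $P$ enters: $\E(P,P) \cong \{*\}$ would force $\id_P$ to be the only endomorphism, but more usefully, I want to use that $P$ is connected to control $\Sub(P)$ or $\E(P, \Omega)$. In a localic topos, $\Omega$ is the subobject classifier whose points-with-parameters are opens; the key observation is that for a \emph{connected} object $P$, the map $P \to 1$ being split by some $s\colon 1 \to P$ would make $P$ a retract of $1$, hence subterminal, so the real task is to produce a \emph{global} section of $P$ from the local ones.

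The third and decisive step is to promote a local section to a global one using that $P$ is both connected and projective. Consider the subobject lattice of $P$: the local sections $U_i \rightarrowtail P$ (each $U_i \to P$ is monic since $U_i \rightarrowtail 1$ is and $P\to 1$ separates — wait, one must check $U_i \to P$ is monic, which holds because its composite with $P \to 1$ is the mono $U_i \rightarrowtail 1$) are subobjects of $P$ whose union is all of $P$ (they cover since the $U_i$ cover $1$ and $P\to 1$ is epi). If these subobjects were pairwise compatible they would glue to a section $1 \to P$ splitting $P \to 1$, making $P$ subterminal; so the obstruction is that different local sections may disagree on overlaps. This is where I expect the \textbf{main obstacle} to lie, and I would resolve it by exploiting projectivity of $P$ together with connectedness: the coproduct $\coprod_i U_i \to P$ is epic, and projectivity of $P$ lets me split the canonical epi $\coprod_i U_i \twoheadrightarrow P$ only if $P$ is projective \emph{with respect to that particular epi} — which it is — yielding a map $P \to \coprod_i U_i$ over $1$; post-composing with $\coprod_i U_i \to 1 = \coprod_i U_i \to P \to 1$ shows $P \to \coprod_i U_i$ is a section of an epimorphism onto $P$, so $P$ is a retract of $\coprod_i U_i \subseteq$ a coproduct of subterminals. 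But a retract of a coproduct of subterminal objects in a localic topos is again a coproduct of subterminals (retracts of $\coprod$ subterminals are computed by the retract acting on the indexing poset of opens), and a \emph{connected} such object is a single subterminal. Hence $P$ is subterminal. I would double-check the one delicate point — that $P$ projective genuinely splits the epi from the coproduct of the covering opens, which is legitimate since in a Grothendieck topos "projective" means projective relative to all epimorphisms — and that connectedness indeed collapses a coproduct decomposition of a retract of $\coprod U_i$ to a single summand, which is exactly what $\G_1$ (or directly the definition of connected) gives.
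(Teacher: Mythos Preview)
Your final idea---split an epimorphism from a coproduct of subterminals using projectivity, then use connectedness to factor the section through a single summand---is exactly the paper's proof. But your route to that epimorphism contains a genuine gap. After reducing to full support you choose a cover $\{U_i\}$ of $1$ together with \emph{sections} $s_i\colon U_i \to P$, and then claim that ``the local sections $U_i \rightarrowtail P$ \dots\ are subobjects of $P$ whose union is all of $P$ (they cover since the $U_i$ cover $1$ and $P\to 1$ is epi).'' This inference is invalid: what the cover of $1$ gives you is that the \emph{pullbacks} $P\times_1 U_i = P|_{U_i}$ cover $P$, not that the chosen sections $s_i(U_i)\subseteq P|_{U_i}$ do. A sheaf can have many sections over each $U_i$, and picking one per $U_i$ need not hit everything; so your map $\coprod_i U_i \to P$ is not known to be epic, and the splitting step that follows is unjustified.

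The fix is to drop the detour through the support and the cover of $1$ entirely. The hypothesis ``$\E$ is localic'' says precisely that subterminal objects generate, so for \emph{any} object $P$ there is an epimorphism $p\colon \coprod_\lambda U_\lambda \twoheadrightarrow P$ with each $U_\lambda$ subterminal---no need for $P\to 1$ to be epic. Projectivity of $P$ then splits $p$, and connectedness of $P$ forces the section $P\to \coprod_\lambda U_\lambda$ to land in a single summand $U_{\lambda_0}$; since that factorization is split monic, $P\rightarrowtail U_{\lambda_0}\rightarrowtail 1$ and you are done. This is the paper's argument verbatim. Your closing remark that ``a retract of a coproduct of subterminal objects \dots\ is again a coproduct of subterminals'' is unnecessary and not obviously true in general; the cleaner statement (which you also make) is just that a connected object mapping into a coproduct factors through one summand.
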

\begin{proof}
    Let $X$ be a projective and connected object. Since $\E$ is localic, there is an epimorphism $p\colon \coprod_{\lambda\in \Lambda} U_{\lambda} \twoheadrightarrow X$, where each $U_\lambda$ is subterminal. Since $X$ is projective, the identity morphism $\id_{X}\colon X\to X$ lifts along $p$, which proves that $X$ is a retract of $\coprod_{\lambda\in \Lambda} U_{\lambda}$. The connectedness of $X$ implies that $X$ is a subobject of $U_\lambda$ for a single $\lambda\in \Lambda$. This shows that $X \rightarrowtail U_\lambda \rightarrowtail 1$ is subterminal.
\end{proof}

\begin{proposition}[Completely connected localic topoi]\label{prop:COmpletelyconnectedLocalicTopoi}
    For a locale $L$, its sheaf topos $\Sh(L)$ is completely connected if and only if $L$ has a minimum nonempty open $U_0 \in \O(L)$, i.e., 
    \[
    \forall V\in \O(L), \; (U_0\leq V) \iff (V\neq \bot).
    \]
\end{proposition}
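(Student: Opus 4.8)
The plan is to apply the site characterisation (\Cref{thm:SiteCharacterisation}), using the canonical site of a locale, namely its frame $\O(L)$ equipped with the open-cover topology $J_{\mathrm{can}}$, in which a sieve on $V \in \O(L)$ is covering iff the opens it contains join up to $V$. Under this description, the initial object of the site $\O(L)$ is the bottom element $\bot$, and a sieve $S$ on $\bot$ is the maximal sieve iff $\bot \in S$; so the only nonmaximal sieve on $\bot$ is the empty sieve, which is covering iff $\bigvee \emptyset = \bot$ holds, i.e.\ always. Hence the bottom element of $\O(L)$ is \emph{never} $J_{\mathrm{can}}$-irreducible, and this naive site never witnesses complete connectedness — so the real work is to choose the right site, not to chase the obvious one.

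So instead I would run the two directions by hand. For the ``if'' direction: suppose $L$ has a minimum nonempty open $U_0$. Let $\C$ be the full subcategory of $\O(L)$ on $\{\bot\} \cup \{V : V \neq \bot\} = \O(L)$ itself, but equip it with the topology $J$ in which $S \in J(V)$ iff either $V = \bot$ and $S$ is the maximal sieve, or $V \neq \bot$ and the opens in $S$ that are $\neq \bot$ join to $V$. I would check that $J$ is a genuine Grothendieck topology on $\O(L)$ (stability under pullback is where I would be careful: pulling a cover of $V$ back along $W \hookrightarrow V$ with $W \neq \bot$ gives a cover of $W$ since meets distribute over joins and $W \neq \bot$; pulling back along $\bot \hookrightarrow V$ gives the maximal sieve on $\bot$ by fiat) and that $\Sh(\O(L), J) \simeq \Sh(L)$: the only difference from $J_{\mathrm{can}}$ is the covers of $\bot$, and since $U_0$ is below every nonempty open, any $J_{\mathrm{can}}$-cover of a nonempty $V$ already contains a nonempty open, so adding/removing $\bot$ from sieves changes nothing about sheaves on nonempty opens; meanwhile every presheaf satisfies the (maximal, hence trivial) sheaf condition at $\bot$ automatically. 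Wait — that last point deserves the minimality hypothesis: if there were two minimal nonempty opens (incomparable atoms), a $J_{\mathrm{can}}$-cover of their join need not refine to one containing either, but this does not obstruct the sheaf comparison; the place $U_0$ is really needed is in showing $\bot$ is $J$-irreducible relative to the \emph{canonical} topology on $\Sh(L)$, which I address next. With this $J$, the object $\bot$ is $J$-irreducible by construction, so $(\O(L), J)$ is a completely connected site generating $\Sh(L)$, and \Cref{thm:SiteCharacterisation} gives the claim.

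For the ``only if'' direction: suppose $\Sh(L)$ is completely connected with container object $\vo$. By \Cref{prop:FundamentalPropertiesOfContainerObject} it is connected and projective, so by \Cref{lem:TinyImpliesSubterminalInLocalicTopos} it is subterminal; write $\vo = U_0 \in \O(L)$ for the corresponding open. Since $\vo$ is not initial (\Cref{prop:FundamentalPropertiesOfContainerObject}(3)), $U_0 \neq \bot$. Now for any nonempty open $V$, the object $V \in \Sh(L)$ is not initial, so by \Cref{prop:FundamentalPropertiesOfContainerObject}(6) there is a morphism $\vo \to V$, i.e.\ $U_0 \leq V$; and conversely $U_0 \leq \bot$ is impossible. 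This is exactly the stated characterisation, with ``$(U_0 \le V) \iff (V \ne \bot)$'': the forward direction of that biconditional holds because $U_0 \ne \bot$, and the backward direction is what we just proved.

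\textbf{Main obstacle.} The genuinely delicate point is the ``if'' direction's site bookkeeping: exhibiting a Grothendieck topology $J$ on a site with initial object $I$ that is simultaneously (a) $J$-irreducible at $I$ and (b) generates $\Sh(L)$ rather than some larger or smaller topos. The trap is that the canonical localic site fails (a) for free (the empty cover always covers $\bot$), so one must carefully modify only the covers of $\bot$ while verifying pullback-stability doesn't force other covers of $\bot$ back in — and this modification is legitimate precisely because $U_0$ minimal guarantees no nonempty open is covered ``only through $\bot$''. I expect verifying this non-interference, rather than either adjoint-functor argument, to be where the care is needed; the ``only if'' direction is essentially immediate from the machinery already developed.
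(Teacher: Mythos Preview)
Your ``only if'' direction is correct and is exactly the paper's argument.

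The ``if'' direction, however, has a genuine gap: your modified site $(\O(L), J)$ does \emph{not} generate $\Sh(L)$. By declaring $\bot$ to be $J$-irreducible you have removed the empty covering sieve of $\bot$, and that sieve is precisely what forces $F(\bot)\cong 1$ for every sheaf $F$. With the constraint gone, $F(\bot)$ becomes an arbitrary set (together with restriction maps $F(V)\to F(\bot)$), and the resulting sheaf category is strictly larger than $\Sh(L)$. Concretely, take the Sierpi\'nski locale with $\O(L)=\{\bot<U_0<\top\}$: in your $J$ every covering sieve is maximal, so your topos is $\PSh(\bullet\to\bullet\to\bullet)$, not $\Set^{\to}$. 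In fact your site is exactly $\bigl((\O(L)\setminus\{\bot\})^{\tal},\,J_{\mathrm{can}}^{\tal}\bigr)$ in the notation of \Cref{thm:FamIsCompletelyConnected}, so what you have constructed is $\Fam(\Sh(L))$, not $\Sh(L)$. Your diagnosis that ``the delicate point is the site bookkeeping'' was right, but the danger is not pullback-stability (your $J$ is a perfectly good topology); it is that dropping the empty cover at $\bot$ changes the topos.

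The paper's route is the simpler one you skirted: just \emph{delete} $\bot$. The poset $\O(L)\setminus\{\bot\}$ with the restricted open-cover topology is a standard generating site for $\Sh(L)$, and $U_0$ is now its initial object. Irreducibility of $U_0$ is immediate, since the only sieves on $U_0$ in this site are the empty one (which joins to $\bot\neq U_0$, hence does not cover) and the maximal one. Then \Cref{thm:SiteCharacterisation} applies with no further bookkeeping.
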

\begin{proof}
    If $\O(L)$ has such $U_0 \in \O(L)$, the canonical site consisting of $\O(L)\setminus \{\bot\}$ is a completely connected site with the initial object $U_0$. \Cref{thm:SiteCharacterisation} implies that $\Sh(L)$ is completely connected.

    Conversely, assume that $\Sh(L)$ is completely connected and let $\vo \in \ob(\Sh(L))$ be its container object. Since a container object is projective and connected (\Cref{prop:FundamentalPropertiesOfContainerObject} (1), (2)), \Cref{lem:TinyImpliesSubterminalInLocalicTopos} implies that $\vo$ is subterminal. Let $U_0$ be the corresponding element of $\O(L)$. For any element $V \in \O(L)$, $U_0 \leq V$ if and only if $V$ is not initial, due to \Cref{prop:FundamentalPropertiesOfContainerObject} (6). This proves that $U_0$ is the minimum nonempty open.
\end{proof}

\begin{corollary}[Completely connected topological space]\label{cor:CompletelyConnectedSpacialTopos}
    For a $T_0$ topological space $X$, its sheaf topos $\Sh(X)$ is completely connected if and only if it has an open dense point $x\in X$.
\end{corollary}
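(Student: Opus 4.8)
The plan is to deduce this from \Cref{prop:COmpletelyconnectedLocalicTopoi} by translating the condition ``$L = \O(X)$ has a minimum nonempty open'' into a statement about points of $X$, using the $T_0$ hypothesis. First I would recall that for a topological space $X$, the sheaf topos $\Sh(X)$ is the sheaf topos over the locale $\O(X)$, so \Cref{prop:COmpletelyconnectedLocalicTopoi} applies directly: $\Sh(X)$ is completely connected if and only if there is a minimum nonempty open $U_0 \in \O(X)$, meaning $U_0 \neq \emptyset$ and $U_0 \subseteq V$ for every nonempty open $V$.

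Next I would show that the existence of such a $U_0$ is equivalent to the existence of an open dense point. For the ``if'' direction, suppose $x \in X$ is a point that is open (i.e. $\{x\}$ is open) and dense. Then $\{x\}$ is a nonempty open set, and density of $\{x\}$ means every nonempty open $V$ meets $\{x\}$, i.e. $x \in V$, i.e. $\{x\} \subseteq V$; so $U_0 = \{x\}$ is the minimum nonempty open. For the ``only if'' direction, suppose $U_0$ is the minimum nonempty open. I claim $U_0$ is a singleton: pick any point $x \in U_0$ (it is nonempty); for any other point $y$, the $T_0$ axiom gives an open set containing exactly one of $x, y$, and since every nonempty open contains $U_0 \ni x$, such a set must contain $x$ but not $y$ — hence no open set contains $y$ without containing $x$; applying this with the roles reversed would need an open set separating them the other way, so in fact one shows $y \notin \overline{\{x\}}$ forces a separating open, and the upshot is that $U_0$ cannot contain two distinct points without violating $T_0$ (any open neighbourhood of one point of $U_0$ contains all of $U_0$). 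So $U_0 = \{x\}$ is open, and its minimality among nonempty opens says exactly that $x \in V$ for every nonempty open $V$, i.e. $\{x\}$ is dense.

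The one genuinely delicate point — and the step I'd expect to require the most care — is the argument that $U_0$ is a singleton, where the $T_0$ hypothesis is essential (without it, $\Sh(X)$ only depends on the locale $\O(X)$ and a minimum nonempty open need not be a point at all, e.g. an indiscrete two-point space). The clean way to phrase it: if $x, y \in U_0$ are distinct, then by $T_0$ there is an open $W$ containing, say, $x$ but not $y$; but $W$ is nonempty so $U_0 \subseteq W$, contradicting $y \in U_0 \setminus W$. Hence $U_0$ has at most one point, and being nonempty it is a singleton $\{x\}$, which is therefore an open point; minimality of $U_0$ is precisely the density of $\{x\}$. Assembling these two implications with \Cref{prop:COmpletelyconnectedLocalicTopoi} completes the proof.
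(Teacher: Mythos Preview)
Your proposal is correct and follows essentially the same route as the paper: reduce to \Cref{prop:COmpletelyconnectedLocalicTopoi}, then argue that under $T_0$ the minimum nonempty open must be a singleton, and observe that such a singleton is precisely an open dense point. Your final clean $T_0$ argument (if $x,y\in U_0$ are distinct, a separating open $W$ is nonempty hence contains $U_0$, contradiction) is exactly the step the paper leaves implicit.
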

\begin{proof}
A point $x\in X$ is dense if and only if $x\in V \iff V \neq \emptyset$ holds for any open subset $V \subset X$. Therefore, the existence of an open dense point is equivalent to the existence of the minimum open subset of $X$ that is a singleton.

Under the assumption that $X$ is $T_0$, if a minimum nonempty open subset $U_0$ of $X$ exists, $U_0$ is a singleton. Now, \Cref{prop:COmpletelyconnectedLocalicTopoi} completes the proof.
\end{proof}

The topos of trees (\Cref{Exmpl:ToposOfTrees}) is an example of a completely connected localic topos, which is the sheaf topos over the topological space visualised in \Cref{fig:TreesToposTopologicalSpace}.

\begin{figure}
    \centering
    \begin{tikzpicture}
    \def\dx{1.2}
    \def\dy{0.2}
    
    \foreach \x in {0,1,2,3,4,5,6,7,8,9,10} {
        \node[circle,fill,inner sep=1.5pt] (P\x) at (\x * \dx, 0) {};
        \node[below] at (P\x.south) {\x};
    }
    \node[] at (11 * \dx, 0) {$\dots$};
    
    \foreach \n in {0,1,2,3,4,5,6,7,8,9} {
        \draw[dashed] (\n*\dx/2, 0) ellipse ({0.6 + \n * \dx/2} and 0.8+ \n*\dy);
    }
\end{tikzpicture}
    \caption{The underlying topological space of topos of trees, which has an open dense point.}
    \label{fig:TreesToposTopologicalSpace}
\end{figure}
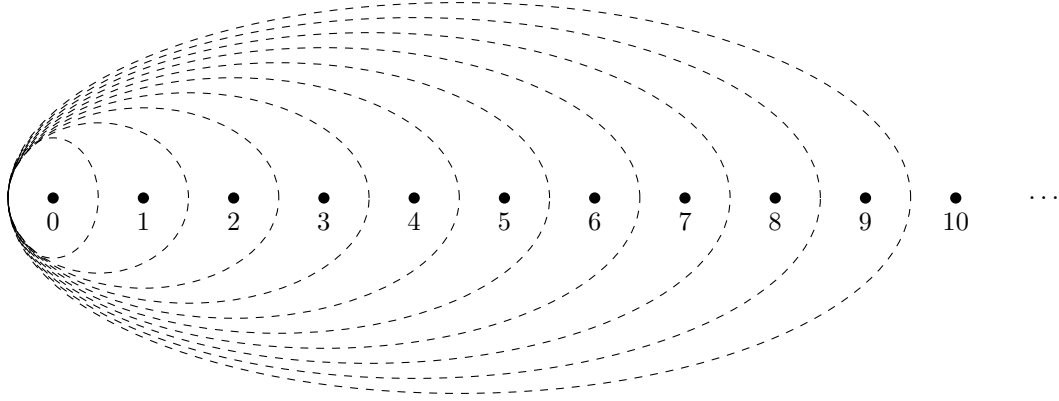

\subsection{Completely connected topoi of families}
\begin{definition}[Category of families]\label{def:FamilyConstruction}
    For a category $\E$, its \demph{category of families} $\Fam(\E)$ has set-indexed families of objects of $\E$ as objects. A morphism from a family $(I, \{X_i\}_{i\in I})$ to another family $(J, \{Y_j\}_{j\in J})$ is a pair $(\alpha, \{f_i\}_{i \in I})$ of a function $\alpha\colon I \to J$ and an $I$-indexed family of morphisms $ \{f_i\colon X_i \to Y_{\alpha(i)}\}_{i \in I}$.
\end{definition}
In other words, $\Fam(\E)$ is the free small coproduct cocompletion of $\E$.
For example, we have $\Fam(\Set) \simeq \Set^{\to}$ and $\Fam(\Set_*) \simeq \PSh(\mathbb{F}_2, 1,\times)$. 

\begin{theorem}[Family construction]\label{thm:FamIsCompletelyConnected}
For any Grothendieck topos $\E$, its category of families $\Fam(\E)$ is a completely connected Grothendieck topos.
\end{theorem}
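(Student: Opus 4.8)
The strategy is to show that $\Fam(\E)$ is a Grothendieck topos by exhibiting it as a presheaf-type construction, and then to apply \Cref{thm:SiteCharacterisation} by producing a small completely connected site that generates it. First I would recall that $\Fam(\E)$ is the free small-coproduct cocompletion of $\E$; concretely, an object is a set-indexed family $(I,\{X_i\}_{i\in I})$, and since $\E$ is extensive (being a topos), $\Fam(\E)$ can be identified with the category of those presheaves, or more precisely with a suitable comma/gluing category. I would make the identification $\Fam(\E) \simeq \Set/{-} \text{-style gluing}$ precise by noting that a family $(I,\{X_i\})$ is the same as an object $\coprod_i X_i$ of $\E$ together with a map to $\G_0(I)$ exhibiting $I$ as its set of ``components''; this presents $\Fam(\E)$ as an Artin gluing, which is flagged in the paper as \Cref{rmk:FamilyAsArtinGluing}. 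Either way, the first block of the proof must establish that $\Fam(\E)$ is a Grothendieck topos: it has all small colimits and finite limits (computed componentwise after refining index sets along pullbacks of the indexing functions), it is locally presentable since $\E$ is, and it satisfies Giraud's exactness axioms because $\E$ does and $\Set$ is extensive.

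Second, having $\Fam(\E)$ as a Grothendieck topos, I would build the site. Pick a small generating set $G \subset \ob(\E)$ for $\E$ (closed under nothing in particular). Adjoin to $G$ the empty family $(\emptyset,\,!)$, call it $I$. Let $\C$ be the full subcategory of $\Fam(\E)$ on the single-object families $(\{*\}, X)$ for $X \in G$ together with $I$, and let $J$ be the canonical topology on $\C$ (equivalently the topology induced as a site of definition). The key observations: the empty family $I$ is initial in $\Fam(\E)$ — indeed it is the initial object, since a morphism out of it into any $(J,\{Y_j\})$ is just the empty function together with an empty family of morphisms, uniquely — and hence $I$ is initial in $\C$. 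Moreover $I$ is $J$-irreducible: any jointly-epimorphic sieve on the initial object $I$ must already contain a morphism with nonempty source, but the only way a family maps onto $I = (\emptyset,\,!)$ is if it too has empty index set, hence is itself initial, hence the identity $\id_I$ is in the sieve, forcing the sieve to be maximal. A small technical point to verify carefully here is that the single-object families together with $I$ do generate $\Fam(\E)$ — this reduces to $G$ generating $\E$ plus the fact that every family is a coproduct of single-object families — and that the induced topology is subcanonical, which follows since each such object is tiny/connected in $\Fam(\E)$ (a single-object family $(\{*\},X)$ is connected because a coproduct decomposition of it would force a decomposition of the index set $\{*\}$).

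With $I$ a $J$-irreducible initial object of the small site $(\C,J)$ generating $\Fam(\E)$, \Cref{thm:SiteCharacterisation} immediately gives that $\Fam(\E)$ is completely connected. Alternatively — and this may be cleaner to write — I would bypass the abstract site and directly exhibit the adjoint string: the connected components functor $\G_1 \colon \Fam(\E) \to \Set$ sends $(I,\{X_i\})$ to $I$ (each single-object family being connected), its right adjoint $\G_0 \colon \Set \to \Fam(\E)$ sends a set $I$ to the family $(I,\{1_\E\}_{i\in I})$ of terminal objects, and then $\G_1$ itself has a left adjoint $\G_2 \colon \Set \to \Fam(\E)$ sending $I$ to the family $(I, \{0_\E\}_{i\in I})$ of \emph{initial} objects of $\E$ — one checks $\Fam(\E)((I,\{0_\E\}), (J,\{Y_j\})) \cong \Set(I,J)$ since the empty family of morphisms out of each $0_\E$ component carries no data, wait: more precisely a morphism $0_\E \to Y_{\alpha(i)}$ is unique, so the homset is exactly $\Set(I,J) = \Set(I, \G_1(J,\{Y_j\}))$. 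This verifies $\Ax{2}$ by definition, and the container object is $\vo = (\{*\}, 0_\E)$, the singleton family of the initial object of $\E$.

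\textbf{Main obstacle.} The genuinely substantive step is verifying that $\Fam(\E)$ is a Grothendieck topos — in particular checking Giraud's axioms, and most delicately that finite limits exist and are computed ``componentwise over a common refinement of index sets'' and that these limits distribute over the coproducts correctly. This is where extensivity of $\E$ (and of $\Set$) does the real work; I expect the cleanest route is to invoke the Artin gluing presentation $\Fam(\E) \simeq \mathbf{Gl}(\text{lex functor})$ alluded to in \Cref{rmk:FamilyAsArtinGluing} so that topos-hood is inherited from general Artin gluing theory, rather than re-deriving Giraud's axioms by hand.
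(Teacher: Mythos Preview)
Your direct-adjoint construction (the ``alternatively'' paragraph) is correct once $\Fam(\E)$ is known to be a Grothendieck topos, and it is genuinely simpler than the paper's argument. The paper does not exhibit $\G_2$ explicitly; instead it builds an abstract completely connected site $(\C^{\tal}, J^{\tal})$ by adjoining a formal strict initial object $\emptyset$ to a small site $(\C,J)$ for $\E$, declares $\emptyset$ to be $J^{\tal}$-irreducible by fiat, and then verifies $\Sh(\C^{\tal}, J^{\tal}) \simeq \Fam(\E)$ via an intermediate equivalence $\PSh(\C^{\tal}) \simeq \Fam(\PSh(\C))$ and a matching of sheaf conditions. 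Your route buys brevity and directly identifies the container object $\vo = (\{*\}, 0_{\E})$; the paper's route buys an explicit site presentation of $\Fam(\E)$, which it wants for its own sake. Your suggestion to obtain topos-hood of $\Fam(\E)$ from Artin gluing along the lex functor $\G_0\colon \Set \to \E$ is also sound and is exactly what \Cref{rmk:FamilyAsArtinGluing} records.

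Your site-based approach, however, contains a real error. You take $I = (\emptyset, !)$, the initial object of $\Fam(\E)$, and claim it is irreducible in the canonical topology. It is not: in any Grothendieck topos the empty sieve covers the initial object, since the empty family is jointly epimorphic onto $0$ (the induced map $\coprod_{\emptyset} \to 0$ is the identity). Your sentence ``any jointly-epimorphic sieve on the initial object $I$ must already contain a morphism with nonempty source'' fails precisely for the empty sieve. The fix is to replace $I$ by the singleton family of the initial object $(\{*\}, 0_{\E})$ --- the very object you correctly name as the container at the end --- and to drop $(\emptyset,!)$ from the generating subcategory. Then $(\{*\}, 0_{\E})$ is initial among single-object families (hence initial in your $\C$), it is \emph{not} initial in $\Fam(\E)$ because its index set is nonempty, and the empty sieve on it is no longer covering; so it is irreducible as required. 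Under the paper's equivalence $\PSh(\C^{\tal}) \simeq \Fam(\PSh(\C))$ the representable $y(\emptyset)$ corresponds exactly to $(\{*\}, 0)$, not to the initial family, which is why the paper's formal $\emptyset$ does the job that your $I$ cannot.
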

\begin{proof}
Although there are other proofs that may be shorter, here we prefer a concrete proof, which provides a description of a site for $\Fam(\E)$.
    Let $(\C, J)$ be a small site that generates $\E$. We will construct a completely connected site $(\C^{\tal}, J^\tal)$ such that $\Fam(\E) \simeq \Sh(\C^\tal, J^\tal)$.

    The category $\C^\tal$ is defined as the category $\C$ equipped with a new formal initial object $\emptyset$. The object $\emptyset\in \ob(\C^\tal)$ is a strict initial object in the sense that, for any object $c\in \ob(\C) \subsetneq \ob(\C^\tal)$, there are no morphisms from $c$ to $\emptyset$.

    The topology $J^\tal$ is defined as follows. The only $J^\tal$-covering sieve of the initial object $\emptyset$ is the maximal sieve, that is, the initial object $\emptyset$ is $J^\tal$-irreducible. For other objects $c\in \ob(\C)$, a sieve $S'$ is $J^\tal$-covering if and only if $!\colon \emptyset \to c \in S'$ and $S\coloneqq S'\setminus \{!\colon \emptyset \to c\}$, which is a sieve of $c$ in the original category $\C$, is $J$-covering $S\in J(c)$. Thus, there is a bijection $J(c) \xrightarrow{\cong} J^\tal(c)$ that sends a $J$-covering sieve $S$ to $S\cup \{!\colon \emptyset \to c\}$. It is straightforward to prove that $(\C^\tal , J^{\tal})$ is a site.

    In order to prove $\Fam(\E) \simeq \Sh(\C^\tal, J^\tal)$, we first prove $\Fam(\PSh(\C)) \simeq \PSh(\C^\tal)$. This equivalence is given by sending a presheaf $P \in \ob(\PSh(\C^{\tal}))$ to the $P(\emptyset)$-indexed family $(P(\emptyset), \{P_i|_{\C^\op}\}_{i\in (P(\emptyset)})$, where $P_i \in \PSh(\C^\tal)$ is the subpresheaf of $P$ given by
    \[
    P_i (c) \coloneqq \{x\in P(c) \mid x * (! \colon \emptyset \to c)=i \in P(\emptyset)\}.
    \]
    It is also straightforward to verify that this correspondence gives an equivalence of categories.
    Lastly, one can prove that the $J^\tal$-sheaf condition for a presheaf $P\in \PSh(\C)$ is equivalent to the componentwise $J$-sheaf condition via the equivalence $\Fam(\PSh(\C)) \simeq \PSh(\C^\tal)$. This completes the proof of the equivalence $\Fam(\E) \simeq \Sh(\C^\tal, J^\tal)$.
\end{proof}

\begin{corollary}\label{cor:toposIsClosedSubtoposOfCompletelyConnectedTopos}
    Every Grothendieck topos $\E$ is a closed subtopos of a completely connected Grothendieck topos.
\end{corollary}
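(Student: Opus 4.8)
The plan is to exhibit $\E$ explicitly as a closed subtopos of $\Fam(\E)$, which is completely connected by \Cref{thm:FamIsCompletelyConnected}. Recall that closed subtopoi of a Grothendieck topos $\F$ are classified by subterminal objects $U \rightarrowtail 1_{\F}$: the closed subtopos complementary to the open subtopos determined by $U$ consists of those objects on which $U$ ``acts trivially'', and the inclusion is the double-negation-like localization forcing $U$ to be the empty object. So the first step is to identify the right subterminal object of $\Fam(\E)$. The terminal object of $\Fam(\E)$ is the singleton family $(1, \{1_{\E}\})$, and inside it sits the subterminal object $U \coloneqq (\emptyset, \{\})$, the empty family, which is exactly the initial object of $\Fam(\E)$ (not to be confused with the container object $\vo = (1,\{0_{\E}\})$). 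I would take the closed subtopos determined by this $U$.

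Next I would compute that closed subtopos. Using the site $(\C^\tal, J^\tal)$ constructed in the proof of \Cref{thm:FamIsCompletelyConnected}, the subterminal object $U = (\emptyset,\{\})$ corresponds to the representable-ish presheaf on $\C^\tal$ that is the singleton on $\emptyset$ and empty on every $c \in \ob(\C)$ (this is the subobject of $1$ picking out the formal initial object). The open subtopos it determines is the slice-like topos of sheaves that ``live only over $\emptyset$'', which is the degenerate topos $1$ — equivalently, $\Fam(\E)/U \simeq \mathbf{1}$ since $U$ is the initial object and the only object mapping to the initial object is the initial object itself. Its complementary closed subtopos is therefore the full subtopos of $\Fam(\E)$ on objects $X$ for which the canonical map $X \to X \times (U \Rightarrow \text{--})$-construction collapses appropriately; concretely, these are precisely the families $(I,\{X_i\})$ that are ``connected'' in the relevant sense, i.e. after sheafification for the closed topology one is left with families indexed by a single element, recovering $\E$. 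More cleanly: the closed subtopos complementary to the open subtopos $\mathbf{1} \hookrightarrow \Fam(\E)$ is obtained by inverting the map $0 \to U$, i.e. forcing the initial object to be terminal-complemented, and I would check directly on the site that $\Sh(\C^\tal, J^\tal_{\text{closed}}) \simeq \Sh(\C, J) = \E$, where $J^\tal_{\text{closed}}$ is the topology generated by $J^\tal$ together with the empty sieve on $\emptyset$.

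Alternatively, and perhaps more transparently, I would argue abstractly: $\Fam(\E)$ is (by \Cref{thm:FamIsCompletelyConnected} and its proof, cf. \Cref{rmk:FamilyAsArtinGluing}) the Artin gluing of $\E$ along a suitable left-exact comonad, and Artin gluing always presents the glued-in topos as a closed subtopos. In that language, $\Fam(\E) = \Gl(F)$ for an appropriate $F\colon \Set \to \E$, the copy of $\Set$ is the open part, and $\E$ is the complementary closed part — this is the standard fact that in $\Gl(F)$ the second factor is a closed subtopos and the first an open one. The main obstacle I anticipate is bookkeeping: making precise which subterminal object of $\Fam(\E)$ is used, verifying that the corresponding open subtopos really is the degenerate topos $\mathbf{1}$ (so that ``closed complement'' is the whole of $\E$ and not some proper subtopos), and checking the site-level computation $J^\tal_{\text{closed}}$ recovers $(\C,J)$ exactly. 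None of these steps is deep, but getting the indexing conventions in \Cref{def:FamilyConstruction} and the formal-initial-object conventions in the proof of \Cref{thm:FamIsCompletelyConnected} to line up requires care.
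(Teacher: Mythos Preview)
Your proposal contains a genuine confusion that makes the main argument fail as written. You take the subterminal $U \coloneqq (\emptyset,\{\})$, which is the \emph{initial object} $0$ of $\Fam(\E)$. For the initial object the closed complement is always the whole topos: in any topos $X \times 0 \cong 0$ for every $X$, so every object satisfies the closed-subtopos condition, and you recover $\Fam(\E)$, not $\E$. Equivalently, the open subtopos determined by $0$ is degenerate, and the complement of the degenerate open is everything.

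The subterminal you actually need is the one you explicitly set aside: the container object $\vo = (1,\{0_\E\})$. This is precisely what the paper uses. One checks directly that $(I,\{X_i\})\times \vo \cong (I,\{0_\E\})$, so $(I,\{X_i\})\times \vo \cong \vo$ iff $I$ is a singleton, and hence the closed subtopos for $\vo$ is $\E$. Note also that the open subtopos $\Fam(\E)/\vo$ is not degenerate but equivalent to $\Set$, which is exactly the open piece in the Artin gluing picture you sketch at the end.

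Your confusion is visible in the middle paragraph: you say $U = (\emptyset,\{\})$ ``corresponds to the representable-ish presheaf on $\C^\tal$ that is the singleton on $\emptyset$ and empty on every $c\in\ob(\C)$''. That presheaf is $y(\emptyset)$, and under the equivalence $\PSh(\C^\tal)\simeq \Fam(\PSh(\C))$ it corresponds to the $1$-indexed family of the empty presheaf, i.e.\ to $\vo$, not to the empty family. So your site-level step ``add the empty sieve on $\emptyset$'' and your Artin-gluing paragraph are in fact computing the closed subtopos for $\vo$ and would succeed; but they do not match the object $U$ you declared, and the surrounding prose (``open subtopos is degenerate'', ``$\Fam(\E)/U \simeq \mathbf{1}$'') is wrong for $\vo$. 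Once you replace $U$ by $\vo$ throughout, your argument becomes essentially the paper's proof (with \Cref{rmk:FamilyAsArtinGluing} as the alternative viewpoint).
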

\begin{proof}
    Due to \Cref{thm:FamIsCompletelyConnected}, it suffices to prove that $\E$ is a closed subtopos of $\Fam(\E)$. 
    Let $\vo$ be the container object of the completely connected topos $\Fam(\E)$. As a family of objects, $\vo$ is a singleton family of the initial object $\{0_{\E}\}_{*\in \{*\}}$. This is subterminal in $\Fam(\E)$.

    We prove that the closed subtopos of $\Fam(\E)$ corresponding to the subterminal object $\vo \rightarrowtail 1$ is equivalent to $\E$. 
    By the concrete description of closed subtopos (for example, see \cite[][Proposition A.4.5.3]{johnstone2002sketchesv2}) it suffices to prove that $(I, \{X_i\}_{i\in I}) \times \vo \cong \vo$ if and only if $I$ is a singleton. This follows from $\vo = (\{*\}, \{0_{\E}\}_{*\in \{*\}})$ and $(I, \{X_i\}_{i\in I}) \times \vo \cong (I, \{X_i \times 0_{\E}\}_{i\in I}) \cong (I, \{0_{\E}\}_{i\in I})$.
\end{proof}

\begin{remark}[Artin gluing]\label{rmk:FamilyAsArtinGluing}
    The above two results may be understood in terms of \demph{Artin gluing}. (See \cite[][Example A 2.1.12. The gluing construction]{johnstone2002sketchesv1} for more details of Artin gluing.) In fact, for a Grothendieck topos $\E$, its category of families $\Fam(\E)$ is equivalent to the Artin gluing of the finite limit preserving functor $\G_0 \colon \Set \to \E$
    \[
    \Fam(\E) \simeq \Gl(\G_0 \colon \Set \to \E).
    \]
    This proves that $\E$ is a closed subtopos of $\Fam(\E)$, and $\Set \simeq \Sh(1)$ is an open subtopos of $\Fam(\E)$. 
    From the viewpoint of the duality to local topoi, this is dually similar to the Freyd cover construction, which makes a topos $\E$ into a local topos $\overline{\E}$ by taking Artin gluing $\overline{\E} \coloneqq \Gl(\G_{-1}\colon \E \to \Set)$.
\end{remark}

\begin{example}[Presheaf case] \label{exmpl:PresheafCaseOfFamilyConstruction}
For a small category $\C$, we have $\Fam(\PSh(C)) \simeq \PSh(\C^\tal)$. For example, starting from the empty category, which will be denoted by the ordinal number $0$, we obtain the completely connected topos of sets $\PSh(0^\tal) \simeq \Set$ (\Cref{exmp:toposOfSetsAsCompletelyCOnnectedTopoi}). Then, we obtain the Sierpi\'nski topos $\PSh(0^{\tal\tal}) \simeq \Set^{\to}$ (\Cref{exmp:SierpinskitoposAsCompletelyCOnnectedTopoi}). As a \dq{limit step}, we obtain the topos of trees, which is a fixed point of the family construction $\Forests \coloneqq \Fam(\Trees) \simeq \PSh(1+\omega, \leq) \simeq \PSh(\omega, \leq) \simeq  \Trees$ (see \Cref{Exmpl:ToposOfTrees}).
The topos of augmented simplicial sets $\Fam(\PSh(\Delta))$ is also an example of family construction (see \Cref{exmpl:AugmentedSimplicialSets}).
\end{example}

\begin{remark}[The converse direction of \Cref{thm:FamIsCompletelyConnected}]\label{rmk:TheInverseDirection}
    For a completely connected topos $\E$, the following conditions are equivalent:
    \begin{enumerate}
        \item There is a Grothendieck topos $\F$ such that $\E \simeq \Fam(\F)$.
        \item For any object $X \in \E$, $X$ is connected if and only if $ X \times \vo \cong \vo$.
    \end{enumerate}
    We have already seen that $(1)\implies (2)$ in the proof of \Cref{cor:toposIsClosedSubtoposOfCompletelyConnectedTopos}. Conversely, if the topos $\E$ satisfies $(2)$, the connectedness of the container object $\vo$ (\Cref{prop:FundamentalPropertiesOfContainerObject} (1)) implies that $\vo\times \vo \cong \vo$ and therefore $\vo$ is subterminal. Let $\F$ be the closed subtopos corresponding to $\vo \rightarrowtail 1$. The assumption $(2)$ implies that the embedding $\F \hookrightarrow \E$ coincides with the embedding of connected objects $\E_{\conn} \hookrightarrow \E$. This and the local connectedness of $\E$ imply that $\E \simeq \Fam(\F)$.
\end{remark}

Of course, not every completely connected topos is of the form $\Fam(\E)$. For example, the topos of idempotents (\Cref{exmpl:ToposOfIdempotentsAsCompletelyConnected}) is not of this form. In fact, for a Grothendieck topos $\E$, $\E$ cannot satisfy $\Ax{\infty}$ unless $\vo = 1$ (see \Cref{prop:characterisationOfAxiomInfty}).


\subsection{Completely connected topoi with a further left adjoint}

\begin{proposition}\label{prop:characterisationOfAxiomInfty}
    For a Grothendieck topos $\E$, the following conditions are equivalent.
    \begin{enumerate}
        \item $\E$ satisfies $\Ax{\infty}$.
        \item The terminal object of $\E$ is a container object.
        \item $\E$ is completely connected and its container object $\vo$ admits a global section $1 \to \vo$.
    \end{enumerate}
\end{proposition}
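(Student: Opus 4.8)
The plan is to prove the cycle $(1)\implies(2)\implies(3)\implies(1)$, exploiting that $\Ax{\infty}$ forces the adjoint string to collapse in the way already observed in the proof of \Cref{thm:AxiomsImplication}. First I would handle $(1)\implies(2)$: assuming $\Ax{\infty}$, the proof of \Cref{thm:AxiomsImplication} shows $\G_2 \cong \G_0$ (since $\Ax{3}$ makes $\G_2$ both continuous and cocontinuous, hence isomorphic to the unique such functor $\G_0$). Consequently $\G_1 \cong \G_{-1}$, and by \Cref{prop:ImmediateRephrase} (the proof of $(1)\implies(2)$ there) the container object is $\G_2(1)\cong \G_0(1)=1_\E$. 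So $1_\E$ is a container object; note $\E(1_\E,-)\cong \G_{-1}$ does have $\G_0$ as a left adjoint, which is exactly the condition in \Cref{def:containerObject} read in the appropriate direction — I should double-check the variance: a container object $X$ satisfies $\E(X,-)\dashv\,?$ no, rather $\E(X,-)$ \emph{is} a left adjoint to $\G_0$, i.e. $\E(X,-) = \G_1$ and $\G_1\dashv\G_0$; with $\G_1\cong\G_{-1}\cong\E(1_\E,-)$ this holds.

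For $(2)\implies(3)$: if $1_\E$ is a container object then in particular a container object exists, so $\E$ is completely connected by \Cref{prop:ImmediateRephrase}, and $\vo\cong 1_\E$ (uniqueness of container objects) trivially admits a global section, namely $\id_{1_\E}$. This step is essentially formal.

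The substantive step is $(3)\implies(1)$, which I expect to be the main obstacle. Assume $\E$ is completely connected with a chosen global section $s\colon 1_\E \to \vo$. We already have the adjoint string $\G_2\dashv\G_1\dashv\G_0\dashv\G_{-1}$, so it remains to produce $\G_3 \dashv \G_2$, and then $\Ax{3}$ gives $\Ax{\infty}$ by \Cref{thm:AxiomsImplication}. Equivalently, I must show $\G_2\colon\Set\to\E$ has a left adjoint, i.e. $\G_2$ is continuous and accessible (then apply the adjoint functor theorem, as in \Cref{prop:ImmediateRephrase}). Since $\G_2$ preserves colimits (it is itself a left adjoint), accessibility is automatic; the real content is that $\G_2$ preserves finite limits — and by \Cref{exmpl:ToposOfFunctions} the obstruction to extending the string in the Sierpi\'nski case was precisely that $\G_2$ fails to preserve the terminal object. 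So the key is: $\G_2$ preserves the terminal object iff $\vo = \G_2(1)$ is terminal iff $\vo$ admits a (necessarily unique, by rigidity, \Cref{prop:FundamentalPropertiesOfContainerObject}(5)) global section that is moreover an isomorphism. Here the global section $s\colon 1_\E\to\vo$ and the unique map $\vo\to 1_\E$ compose (in one order) to $\id_{1_\E}$ and (in the other) to an endomorphism of $\vo$, which is $\id_\vo$ by rigidity; hence $\vo\cong 1_\E$. Once $\vo\cong 1_\E$, $\G_2\cong\G_0$ (both are the functor $S\mapsto \coprod_S 1_\E$, up to the identification $\G_2(S)=\coprod_S\vo$), so $\G_2$ preserves all finite limits, whence $\G_3$ exists and $\E$ satisfies $\Ax{3}$, hence $\Ax{\infty}$ by \Cref{thm:AxiomsImplication}. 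The one point needing care is the identification $\G_2(S)\cong\coprod_S\vo$: this follows because $\G_2$ is a left adjoint, hence cocontinuous, and $S=\coprod_S 1_{\Set}$, so $\G_2(S)\cong\coprod_S\G_2(1)=\coprod_S\vo$.
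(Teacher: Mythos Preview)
Your proposal is correct and follows essentially the same cycle $(1)\Rightarrow(2)\Rightarrow(3)\Rightarrow(1)$ as the paper, with the same key observation in $(3)\Rightarrow(1)$: rigidity of $\vo$ (\Cref{prop:FundamentalPropertiesOfContainerObject}(5)) forces the global section $1_\E\to\vo$ to be an isomorphism. The only cosmetic difference is that, once $\vo\cong 1_\E$, the paper concludes directly via $\G_{-1}=\E(1_\E,-)\cong\E(\vo,-)=\G_1\dashv\G_0$ and the equivalence $\Ax{\infty}\iff(\G_{-1}\dashv\G_0)$ from \Cref{thm:AxiomsImplication}, whereas you detour through $\G_2\cong\G_0$ and $\Ax{3}\iff\Ax{\infty}$; both routes are immediate from \Cref{thm:AxiomsImplication}.
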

\begin{proof}
    If $\E$ satisfies $\Ax{\infty}$, we have $\E(1,-) = \G_{-1} \dashv \G_0$ (\Cref{thm:AxiomsImplication}). This proves that the terminal object $1$ is a container object of $\E$.

    If the terinal object $1\in \ob (\E)$ is a container object of $\E$, then $\E$ is completely connected due to \Cref{prop:ImmediateRephrase}. In this case, it is obvious that the container object $\vo \cong 1$ admits a global section $1\to \vo$.

    Lastly, we assume that $\E$ is completely connected and that its container object $\vo$ admits a global section. Then, the global section $1\to \vo$ is an isomorphism since $\vo$ is rigid (\Cref{prop:FundamentalPropertiesOfContainerObject} (5)). This proves $\G_{-1} = \E(1,-) \cong \E(\vo, -) = \G_1 \dashv \G_0$. Due to \Cref{thm:AxiomsImplication}, $\E$ satisfies $\Ax{\infty}$.
\end{proof}

\begin{example}[Eventually fixed discrete dynamical systems]
    Let $\PSh(\N,0,+)$ be the topos of discrete dynamical systems, which is the presheaf topos over the additive monoid $(\N,0,+)$. An object of $\PSh(\N,0,+)$ is a pair $(X,f)$ of a set $X$ and an endomorphism $f\colon X \to X$. This topos is not completely connected due to \Cref{prop:Presheafcase}.
    Let $\E \subset\PSh(\N,0,+)$ be the full subcategory consisting of discrete dynamical systems $(X,f)$ such that 
    \[
    \forall x\in X,\;  \exists n\in \N,\; f^{n+1}(x) = f^n(x).
    \]
    \invmemo{\cite{hora2024quotient} for more details.}
    This is an example of a Grothendieck topos that satisfies $\Ax{\infty}$ and is not a presheaf topos.
\end{example}

\printbibliography

\end{document}